\documentclass[nothms]{arxiv}

\usepackage{graphicx}
\usepackage{caption}
\usepackage{subcaption}
\usepackage{xcolor}
\usepackage{tikz}
\usepackage{ifthen}
\usepackage{makecell}
\usepackage{tabularx}
\usepackage{pgfplots}
\pgfplotsset{compat=1.18}
\usetikzlibrary{calc}
\usetikzlibrary{cd}
\usepackage{mathtools,amssymb,stmaryrd}\allowdisplaybreaks

\usepackage{enumitem}
\setlist[itemize]{label=\textbullet, topsep=1ex, itemsep=0ex, leftmargin=5ex}
\setlist[enumerate]{topsep=1ex, itemsep=0ex}

\usepackage{xcolor}
\colorlet{darkgreen}{green!40!black}
\usepackage{ifpdf}
\ifpdf \usepackage[pdftex]{hyperref}
\else \usepackage[ps2pdf]{hyperref} % or dvipdfmx, if you use that
      \usepackage{breakurl}
\fi
\hypersetup{colorlinks=true,urlcolor=blue,citecolor=darkgreen,
linkcolor=darkgreen,linktocpage,bookmarksnumbered,unicode}

\newtheorem{theorem}[subsection]{Theorem}
\newtheorem{proposition}[subsection]{Proposition}
\newtheorem{lemma}[subsection]{Lemma}
\newtheorem{conjecture}[subsection]{Conjecture}

\newtheorem{question}{Question}

\theoremstyle{definition}
\newtheorem{definition}[subsection]{Definition}
\newtheorem{example}[subsection]{Example}

\newtheorem{remark}[subsection]{Remark}
\newtheorem{corollary}[subsection]{Corollary}

\newcommand{\N}{\mathbb{N}}

\newcommand{\Z}{\mathbb{Z}}
\newcommand{\ra}{\rightarrow}
\newcommand{\bc}{\mathbf{c}}
\newcommand{\Sp}{\text{Sp}}
\newcommand{\av}{\mathcal{A}}
\newcommand{\tablezero}{\textcolor{gray!50}{\bullet}}

\definecolor{myorange}{RGB}{230,159,0}
\definecolor{myblue}{RGB}{0,114,178}
\definecolor{myred}{RGB}{213,94,0}
\definecolor{myteal}{RGB}{0,158,115}

\title{Avalanche homology of digraphs via sandpile dynamics}
\author{Henri Riihimäki}
\email{henri.riihimaki@su.se}
\address{Nordita Institute, Stockholm University, Stockholm, Sweden}
\thanks{HR was supported by the The Wallenberg Initiative on Networks and Quantum Information.}
\author{Jason P. Smith}
\email{jason.smith@ntu.ac.uk}
\address{Department of Physical Sciences, Nottingham Trent University, Nottingham, UK}
\thanks{JS was supported in part by EPSRC grant UKRI171.}

\keywords{sandpile model, graph homology, digraph, persistent homology}
\classification{05C20, 05C38, 05E45, 18G85, 55N53, 55N31}
\begin{abstract}
We introduce avalanche homology as a new (di)graph homology theory, based on the dynamics of the sandpile model. Avalanche homology is the simplicial homology of the avalanche complex generated from the sets of unstable vertices at the time steps of the sandpile dynamics. In this work we focus on digraphs, and our main results give the homotopy types of the avalanche complex for directed paths and directed cycles for certain initial configurations of the sandpile dynamics. Even for such simple digraphs a wide range of topologies can arise, and we compare this to the directed flag complex and to the recently introduced burning homology. Furthermore, the dynamics yields very naturally a filtered simplicial complex, and hence persistent avalanche homology.
\end{abstract}

\begin{document}
\maketitle

\section{Introduction}
The sandpile model is a discrete dynamical system on a graph \(G\). Its states are configuration vectors \(\bc_t \in \N^n\), whose elements denote the number of grains of sand on each of the \(n\) vertices of \(G\); the process begins from the \emph{initial configuration} \(\bc_0\). If at some time step a vertex has more grains than its degree, this vertex is unstable and fires by sending one grain to each of its neighbours, thus lowering the number of grains on it by the corresponding amount. The dynamics reaches a stable configuration when there are no more unstable vertices. The sandpile model arose from physics as a lattice model for self-organised criticality \cite{bak1987self}. Independently it was introduced for general graphs in combinatorics literature under the name chip-firing game \cite{BjornerLovaszShor1991}.

It is fascinating that such a simple model connects to a wide range of different mathematics. An early algebraic momentum came from the realisation that the sandpile model attaches to a graph \(G\) a finite Abelian sandpile group \cite{Dhar1995}. This group is formed by the recurrent states in the sandpile model, and its size equals the number of spanning trees of \(G\). The group appears as the Jacobian or Picard group in arithmetic geometry and in the discrete study of algebraic curves and Riemann surfaces \cite{Bacher1997,Lorenzini_arithm_graphs}; around the same circle of ideas the sandpile model is related to a Riemann-Roch theorem on graphs \cite{BakerNorine}. The sandpile model on a grid graph with a very large number of initial grains produces fractal structures whose emergence is still an open mathematical question, for some progress see \cite{LevinePegdenSmart}. The model is closely intertwined with numerous combinatorial objects including permutations, words, tableaux, lattice paths, parking functions, polyominoes and the Tutte polynomial \cite{cori2003sand,dukes2021sandpile,dukes2013parallelogram,dukes2019permutation,selig2023combinatorial,selig2018ew}. For a textbook introduction to the sandpile model and its many aspects we refer to \cite{Klivans}.

The sandpile model on \emph{digraphs} (directed graphs) differs from the above simply by changing the firing threshold of a vertex \(v\) from its degree to its out-degree, i.e.\ the number of directed edges emanating from \(v\). Similarly the fired grains are only added to \(v\)'s out-neighbours. For developments parallel to the ones cited above see for example \cite{Chapman2013,Holroyd2008,HujterTothmeresz,JunKimPisano2025}. Recent works established fascinating connections between the sandpile model on digraphs and Leavitt path algebras and their \(K\)-theory~\cite{AbramsHazrat2023,HazratNam2025}.

Despite the multi-faceted mathematical developments around the sandpile model, to the best of our knowledge not much work exists from the perspective of homology and homotopy theory. The model has been extended to take place on cell complexes, see \cite[Chapter 7]{Klivans} and references therein, but there is no direct association of homology or homotopy groups. In this paper, we initiate such an approach by introducing a homology theory for sandpile dynamics, which we call \emph{avalanche homology}. An avalanche in the sandpile model means firing an unstable vertex, and then performing all subsequent firings until a stable configuration is reached; very concretely a sequence of unstable vertices ``avalanches" into a stable state. Equally well we can consider firing all unstable vertices simultaneously at a given time step; such a scheme is often referred to as \emph{parallel firing} \cite{Prisner} or \emph{cluster firing} \cite[Section 2.1.2]{Klivans}. In this paradigm, we call the sets of simultaneously fired vertices the \emph{firing sets}. A key observation is that every subset of a firing set is also a set of vertices which fire simultaneously. This motivates our main definition, and we call the simplicial complex generated by the firing sets the \emph{avalanche complex} \(\av(G,\bc_0)\) of the digraph \(G\), for a given initial configuration \(\bc_0\); avalanche homology is the simplicial homology of this complex.

Section \ref{sec:basic_theory} introduces in detail the sandpile dynamics, and the avalanche complex along with its basic properties for general digraphs. Of particular importance is viewing avalanche homology via the nerve of a cover in Section \ref{subsec:nerve_of_cover}, which gives very efficient computations. Our main motivation in this first work is to understand the range of homologies and homotopy types of the avalanche complex for certain initial configurations on paths \(P_n\) and cycles \(C_n\). These results are proved in Section \ref{sec:results} and  summarised in Table \ref{tab:summary}. One of our main results is Theorem \ref{thm:spread}, which shows that for cycles one need only consider the so called \emph{binary configurations}. The avalanche complex for the initial configuration \(\bc_0 = (1,\dots,1,0\dots,0)\) on a cycle \(C_n\) turns out to be the nerve complex of circular arcs \cite{adamaszek2016nerve}, thus we obtain the homotopy types in Theorem~\ref{thm:zkn}.

\begin{table}
\setlength{\extrarowheight}{13pt}
\newcolumntype{C}{>{\centering\arraybackslash}X}
\renewcommand\tabularxcolumn[1]{m{#1}}
\begin{center}
\begin{tabularx}{\linewidth}{ c | C | c | c | c }
 $G$ & $\bc_0$ & $\av(G,\bc_0)$ & Example & Result \\[5pt] \hline\hline
 $P_n$ & single consecutive block of $1$'s (binary) & $\ast$ &     \begin{tikzpicture}[scale=0.7,baseline=(current bounding box.center)]
    \tikzstyle{edge}=[shorten >= 5pt,shorten <= 5pt,->]
    \node[draw=gray,circle,inner sep=1pt] at (0,0) {\scriptsize 0};
    \node[draw=gray,circle,inner sep=1pt,fill=myorange!50] at (1,0) {\scriptsize 1};
    \node[draw=gray,circle,inner sep=1pt,fill=myorange!50] at (2,0) {\scriptsize 1};
    \node[draw=gray,circle,inner sep=1pt,fill=myorange!50] at (3,0) {\scriptsize 1};
    \node[draw=gray,circle,inner sep=1pt] at (4,0) {\scriptsize 0};
    \draw[edge] (0,0) to (1,0);
    \draw[edge] (1,0) to (2,0);
    \draw[edge] (2,0) to (3,0);
    \draw[edge] (3,0) to (4,0);
    \end{tikzpicture} & \ref{lem:1block_path} \\[15pt]   \hline
 $P_n$ & first half of positions are non-zero & $\ast$ &     \begin{tikzpicture}[scale=0.7,baseline=(current bounding box.center)]
    \tikzstyle{edge}=[shorten >= 5pt,shorten <= 5pt,->]
    \node[draw=gray,circle,inner sep=1pt,fill=myorange!50] at (0,0) {\scriptsize 1};
    \node[draw=gray,circle,inner sep=1pt,fill=myorange!50] at (1,0) {\scriptsize 2};
    \node[draw=gray,circle,inner sep=1pt,fill=myorange!50] at (2,0) {\scriptsize 1};
    \node[draw=gray,circle,inner sep=1pt] at (3,0) {\scriptsize 0};
    \node[draw=gray,circle,inner sep=1pt] at (4,0) {\scriptsize 0};
    \draw[edge] (0,0) to (1,0);
    \draw[edge] (1,0) to (2,0);
    \draw[edge] (2,0) to (3,0);
    \draw[edge] (3,0) to (4,0);
    \end{tikzpicture} & \ref{lem:startzeros} \\[15pt]  \hline
 $P_n$ & latter half of positions are non-zero (binary) & $\ast$ &     \begin{tikzpicture}[scale=0.7,baseline=(current bounding box.center)]
    \tikzstyle{edge}=[shorten >= 5pt,shorten <= 5pt,->]
    \node[draw=gray,circle,inner sep=1pt] at (0,0) {\scriptsize 0};
    \node[draw=gray,circle,inner sep=1pt] at (1,0) {\scriptsize 0};
    \node[draw=gray,circle,inner sep=1pt,fill=myorange!50] at (2,0) {\scriptsize 1};
    \node[draw=gray,circle,inner sep=1pt,fill=myorange!50] at (3,0) {\scriptsize 1};
    \node[draw=gray,circle,inner sep=1pt,fill=myorange!50] at (4,0) {\scriptsize 1};
    \draw[edge] (0,0) to (1,0);
    \draw[edge] (1,0) to (2,0);
    \draw[edge] (2,0) to (3,0);
    \draw[edge] (3,0) to (4,0);
    \end{tikzpicture} & \ref{lem:endzeros} \\[15pt]  \hline
 $P_n$ & $1,1,\underbrace{0,\ldots,0}_{\times k},\underset{\text{\normalsize(binary)}}{1,0,0,\ldots}$ & $\displaystyle\bigvee_{n-k-2}S^1$ &     \begin{tikzpicture}[scale=0.7,baseline=(current bounding box.center)]
    \tikzstyle{edge}=[shorten >= 5pt,shorten <= 5pt,->]
    \node[draw=gray,circle,inner sep=1pt,fill=myorange!50] at (0,0) {\scriptsize 1};
    \node[draw=gray,circle,inner sep=1pt,fill=myorange!50] at (1,0) {\scriptsize 1};
    \node[draw=gray,circle,inner sep=1pt] at (2,0) {\scriptsize 0};
    \node[draw=gray,circle,inner sep=1pt,fill=myorange!50] at (3,0) {\scriptsize 1};
    \node[draw=gray,circle,inner sep=1pt] at (4,0) {\scriptsize 0};
    \draw[edge] (0,0) to (1,0);
    \draw[edge] (1,0) to (2,0);
    \draw[edge] (2,0) to (3,0);
    \draw[edge] (3,0) to (4,0);
    \end{tikzpicture} & \ref{prop:1101_path} \\[15pt]  \hline
 $P_n$ & \makecell{single $0$ position \\(binary)} & \makecell{$S^{\lfloor\frac{n}{2}\rfloor-1}$\\[-5pt] {\tiny or}\\[-5pt] $\ast$} &     \begin{tikzpicture}[scale=0.7,baseline=(current bounding box.center)]
    \tikzstyle{edge}=[shorten >= 5pt,shorten <= 5pt,->]
    \node[draw=gray,circle,inner sep=1pt,fill=myorange!50] at (0,0) {\scriptsize 1};
    \node[draw=gray,circle,inner sep=1pt,fill=myorange!50] at (1,0) {\scriptsize 1};
    \node[draw=gray,circle,inner sep=1pt] at (2,0) {\scriptsize 0};
    \node[draw=gray,circle,inner sep=1pt,fill=myorange!50] at (3,0) {\scriptsize 1};
    \node[draw=gray,circle,inner sep=1pt,fill=myorange!50] at (4,0) {\scriptsize 1};
    \draw[edge] (0,0) to (1,0);
    \draw[edge] (1,0) to (2,0);
    \draw[edge] (2,0) to (3,0);
    \draw[edge] (3,0) to (4,0);
    \end{tikzpicture} & \ref{prop:single0path} \\[15pt]  \hline
 $P_n$ \& $C_n$ & non-zero positions are $i\,(\text{mod}\,k)$ (binary) & $\displaystyle\bigsqcup_k \ast$&     \begin{tikzpicture}[scale=0.7,baseline=(current bounding box.center)]
    \tikzstyle{edge}=[shorten >= 5pt,shorten <= 5pt,->]
    \node[draw=gray,circle,inner sep=1pt,fill=myorange!50] at (0,0) {\scriptsize 1};
    \node[draw=gray,circle,inner sep=1pt] at (1,0) {\scriptsize 0};
    \node[draw=gray,circle,inner sep=1pt,fill=myorange!50] at (2,0) {\scriptsize 1};
    \node[draw=gray,circle,inner sep=1pt] at (3,0) {\scriptsize 0};
    \node[draw=gray,circle,inner sep=1pt,fill=myorange!50] at (4,0) {\scriptsize 1};
    \draw[edge] (0,0) to (1,0);
    \draw[edge] (1,0) to (2,0);
    \draw[edge] (2,0) to (3,0);
    \draw[edge] (3,0) to (4,0);
    \end{tikzpicture} & \makecell{\ref{lem:mod_pos_path} \\  \ref{lem:modcycle}}\\[15pt] \hline
 $C_n$ & $|\bc_0|\ge n$ & $\ast$&     \begin{tikzpicture}[scale=0.5,baseline=(current bounding box.center)]
    \tikzstyle{edge}=[shorten >= 0pt,shorten <= 0pt,->]
    \node[draw=gray,circle,inner sep=1pt,fill=myorange!50] (a) at (0:1.5) {\scriptsize $1$};
    \node[draw=gray,circle,inner sep=1pt,fill=myorange!50] (b) at (72:1.5) {\scriptsize $2$};
    \node[draw=gray,circle,inner sep=1pt,fill=myorange!50] (c) at (144:1.5) {\scriptsize $1$};
    \node[draw=gray,circle,inner sep=1pt,fill=myorange!50] (d) at (216:1.5) {\scriptsize $1$};
    \node[draw=gray,circle,inner sep=1pt,fill=myorange!50] (e) at (288:1.5) {\scriptsize $1$};
    \draw[edge] (b) to (a);
    \draw[edge] (c) to (b);
    \draw[edge] (d) to (c); 
    \draw[edge] (e) to (d);
    \draw[edge] (a) to (e);
    \end{tikzpicture} & \ref{prop:cycles_are_contractible}\\[15pt]  \hline
 $C_n$ & $|\bc_0|= n-1$ & $S^{n-2}$&     \begin{tikzpicture}[scale=0.5,baseline=(current bounding box.center)]
    \tikzstyle{edge}=[shorten >= 0pt,shorten <= 0pt,->]
    \node[draw=gray,circle,inner sep=1pt,fill=myorange!50] (a) at (0:1.5) {\scriptsize $1$};
    \node[draw=gray,circle,inner sep=1pt,fill=myorange!50] (b) at (72:1.5) {\scriptsize $1$};
    \node[draw=gray,circle,inner sep=1pt] (c) at (144:1.5) {\scriptsize $0$};
    \node[draw=gray,circle,inner sep=1pt,fill=myorange!50] (d) at (216:1.5) {\scriptsize $1$};
    \node[draw=gray,circle,inner sep=1pt,fill=myorange!50] (e) at (288:1.5) {\scriptsize $1$};
    \draw[edge] (b) to (a);
    \draw[edge] (c) to (b);
    \draw[edge] (d) to (c); 
    \draw[edge] (e) to (d);
    \draw[edge] (a) to (e);
    \end{tikzpicture} & \ref{cor:1zero}\\[15pt]  \hline
 $C_n$ &all non-zero positions consecutive & \makecell{$S^{2\ell+1}$\\[-5pt]{\scriptsize or}\\ $\bigvee\limits_{n-|\bc_0|} S^{2\ell}$}&     \begin{tikzpicture}[scale=0.5,baseline=(current bounding box.center)]
    \tikzstyle{edge}=[shorten >= 0pt,shorten <= 0pt,->]
    \node[draw=gray,circle,inner sep=1pt,fill=myorange!50] (a) at (0:1.5) {\scriptsize $1$};
    \node[draw=gray,circle,inner sep=1pt,fill=myorange!50] (b) at (72:1.5) {\scriptsize $1$};
    \node[draw=gray,circle,inner sep=1pt,fill=myorange!50] (c) at (144:1.5) {\scriptsize $1$};
    \node[draw=gray,circle,inner sep=1pt] (d) at (216:1.5) {\scriptsize $0$};
    \node[draw=gray,circle,inner sep=1pt] (e) at (288:1.5) {\scriptsize $0$};
    \draw[edge] (b) to (a);
    \draw[edge] (c) to (b);
    \draw[edge] (d) to (c); 
    \draw[edge] (e) to (d);
    \draw[edge] (a) to (e);
    \end{tikzpicture} & \ref{thm:zkn}\\[15pt]  \hline
 $C_n$ &$1,1,\underbrace{0,\ldots,0}_{\times k},\underset{\text{\normalsize(binary)}}{1,0,0,\ldots}$  & $\bigvee S^1$&     \begin{tikzpicture}[scale=0.5,baseline=(current bounding box.center)]
    \tikzstyle{edge}=[shorten >= 0pt,shorten <= 0pt,->]
    \node[draw=gray,circle,inner sep=1pt,fill=myorange!50] (a) at (0:1.5) {\scriptsize $1$};
    \node[draw=gray,circle,inner sep=1pt,fill=myorange!50] (b) at (72:1.5) {\scriptsize $1$};
    \node[draw=gray,circle,inner sep=1pt] (c) at (144:1.5) {\scriptsize $0$};
    \node[draw=gray,circle,inner sep=1pt,fill=myorange!50] (d) at (216:1.5) {\scriptsize $1$};
    \node[draw=gray,circle,inner sep=1pt] (e) at (288:1.5) {\scriptsize $0$};
    \draw[edge] (b) to (a);
    \draw[edge] (c) to (b);
    \draw[edge] (d) to (c); 
    \draw[edge] (e) to (d);
    \draw[edge] (a) to (e);
    \end{tikzpicture} & \ref{prop:cycle201a}\\
\end{tabularx}
\end{center}
\caption{A summary of the main results in Section~\ref{sec:results}. Binary means an initial configuration with only 0's and 1's.}\label{tab:summary}
\end{table}

Due to the dependence of the avalanche complex on the initial configuration, avalanche homology is ``parametrised" by \(\N^n\). By starting the dynamics with different initial configurations, the avalanche complex can exhibit varied topologies on a single digraph, which in itself can be topologically trivial. We illustrate this in Section \ref{sec:comparisons} by comparing to the recently introduced burning homology \cite{burning_homology} and to the homology of the directed flag complex.

On one hand, our work stems from the ongoing active development of homology and homotopy theories in the world of (di)graphs \cite{Asao,eulermag_torsion,reach,cofibrationcat,cubical_sets,FuIvanov,GLMY,HepRoff_reach,HW,KishimotoTong}. Avalanche homology gives a new theory which bridges to various other fields via the sandpile model. In particular, given a simplicial homotopy type one can ask for its incarnation through the avalanche complex on some graph, and then explore connections to some of the unexpected domains referenced above. On the other hand, we are driven by topological data analysis \cite{persistentHH,Tribes_math,Flagser_paper,Reimann_2017,Riihimaki_simplicial_connectivities}, and in Section \ref{sec:per} we look at \emph{persistent avalanche homology}. Our thesis is that the dynamics yields a natural filtration of \(\av(G,\bc_0)\) one simplex at a time and it is potentially much more interesting, even mathematically, to look at this persistent homology rather than just avalanche homology which is, in a sense, the ``stabilised" homology of the dynamics; we make this point of view clear in Section \ref{sec:per}. We conclude with a discussion of open questions in Section \ref{sec:discussion}.

As a final note, in this paper we are interested in the avalanche homology of directed graphs, but our framework extends naturally to the undirected case. We focus our attention on digraphs for three main reasons. Firstly, in this initial work we prove various topological results to understand the behaviour of the theory. The undirected case is combinatorially more complex due to a lack of directed flow of the fired grains, and hence the directed case seems more tractable. Secondly, the undirected case can be modelled by the directed case, simply by making every edge bidirectional. Thirdly, as already mentioned, we are interested in applications of the theory in fields such as neuroscience, where synaptic networks are naturally directed \cite{Reimann_2017}. 

\subsection*{Acknowledgements}
We thank Daisuke Kishimoto for pointing out the connection to the nerve of a cover in Section \ref{subsec:nerve_of_cover}. 

\section{Sandpiles and avalanche homology}\label{sec:basic_theory}
We begin by introducing the sandpile model on directed graphs, for a more detailed introduction see \cite{Klivans} and for further background on digraphs see \cite{bang2000theory}. Throughout we consider a digraph \(G=(V,E)\) to be simple and finite, although the framework can easily be extended to non-simple digraphs, and even infinite digraphs. We denote the directed edges in \(E\) by ordered pairs \((v,w)\), the \emph{out-neighbours} of a vertex $v$ are the vertices \(w\) such that \((v,w) \in E\), and the \emph{out-degree} of \(v\) is \(\mathrm{outdeg}(v) = |\{(v,w) \ | \ w \in V\}|\). Note that we consider $0\in\N$, thus our time step $t$ in the following definition of the sandpile model starts at $0$.
\begin{definition}\label{defn:sandpile}
    The \emph{sandpile model} on a digraph~\(G\) with $n$ vertices is a discrete dynamical system with dynamics defined as follows:
        \begin{enumerate}
            \item  a \emph{configuration} at time $t\in\N$ is a vector \(\bc_t \in \N^n\), where $\bc_t(v)$ represents the number of grains of sand on vertex $v$;
            \item a vertex \(v\) is \textit{unstable} at time \(t\) if \(\bc_t(v) \geq \mathrm{outdeg}(v)\);
            \item an unstable vertex fires by sending one grain to each out-neighbour $w$, thus the number of grains on each out-neighbour increases by $1$ and the grains on $v$ decrease by \(\mathrm{outdeg}(v)\);
            \item at each time step $t\in \N$ an unstable vertex $v$ is chosen to fire, and $\bc_{t+1}$ is the configuration after firing $v$; if all vertices are stable the process terminates.
    \end{enumerate}
\end{definition}

\begin{figure}[h]
\begin{center}\begin{tikzpicture}[scale=1]
    \tikzstyle{edge}=[shorten >= 0pt,shorten <= 0pt,->, thick]
    \tikzstyle{uedge}=[shorten >= 0pt,shorten <= 0pt, thick]
        \def\x{0}\def\y{0}
    \node[draw,circle,inner sep=1.5pt] (v1) at (-1+\x,0+\y) {$1$};
    \node[draw,circle,inner sep=1.5pt] (v2) at (0+\x,0+\y) {$2$};
    \node[draw,circle,inner sep=1.5pt] (v3) at (1+\x,0+\y) {$2$};
    \node[draw,circle,inner sep=1.5pt] (v4) at (-1+\x,1+\y) {$0$};
    \node[draw,circle,inner sep=1.5pt,fill=myorange!50] (v5) at (0+\x,1+\y) {$1$};
    \node[draw,circle,inner sep=1.5pt] (v6) at (1+\x,1+\y) {$0$};
    \node[draw,circle,inner sep=1.5pt] (s) at (0+\x,-1+\y) {$\bullet$};
    \draw[edge] (v6) to (v5);\draw[edge] (v4) to (v5);\draw[edge] (v5) to (v2); \draw[edge] (v2) to (v1);\draw[edge,bend left=15] (v2) to (v3);\draw[edge,bend left=15] (v3) to (v2);\draw[edge] (v3) to (v6);\draw[edge] (v1) to (v4);\draw[edge] (v1) to (s);\draw[edge] (v2) to (s);\draw[edge] (v3) to (s);
    \draw[dotted,thick,->] (\x+1.35,\y+0.5) to (\x+1.65,\y+0.5);
        \def\x{3}\def\y{0}
    \node[draw,circle,inner sep=1.5pt] (v1) at (-1+\x,0+\y) {$1$};
    \node[draw,circle,inner sep=1.5pt,fill=myorange!50] (v2) at (0+\x,0+\y) {$3$};
    \node[draw,circle,inner sep=1.5pt] (v3) at (1+\x,0+\y) {$2$};
    \node[draw,circle,inner sep=1.5pt] (v4) at (-1+\x,1+\y) {$0$};
    \node[draw,circle,inner sep=1.5pt] (v5) at (0+\x,1+\y) {$0$};
    \node[draw,circle,inner sep=1.5pt] (v6) at (1+\x,1+\y) {$0$};
    \node[draw,circle,inner sep=1.5pt] (s) at (0+\x,-1+\y) {$\bullet$};
    \draw[edge] (v6) to (v5);\draw[edge] (v4) to (v5);\draw[edge] (v5) to (v2); \draw[edge] (v2) to (v1);\draw[edge,bend left=15] (v2) to (v3);\draw[edge,bend left=15] (v3) to (v2);\draw[edge] (v3) to (v6);\draw[edge] (v1) to (v4);\draw[edge] (v1) to (s);\draw[edge] (v2) to (s);\draw[edge] (v3) to (s);
    \draw[dotted,thick,->] (\x+1.35,\y+0.5) to (\x+1.65,\y+0.5);
        \def\x{6}\def\y{0}
    \node[draw,circle,inner sep=1.5pt,fill=myorange!25] (v1) at (-1+\x,0+\y) {$2$};
    \node[draw,circle,inner sep=1.5pt] (v2) at (0+\x,0+\y) {$0$};
    \node[draw,circle,inner sep=1.5pt,fill=myorange!50] (v3) at (1+\x,0+\y) {$3$};
    \node[draw,circle,inner sep=1.5pt] (v4) at (-1+\x,1+\y) {$0$};
    \node[draw,circle,inner sep=1.5pt] (v5) at (0+\x,1+\y) {$0$};
    \node[draw,circle,inner sep=1.5pt] (v6) at (1+\x,1+\y) {$0$};
    \node[draw,circle,inner sep=1.5pt] (s) at (0+\x,-1+\y) {$\bullet$};
    \draw[edge] (v6) to (v5);\draw[edge] (v4) to (v5);\draw[edge] (v5) to (v2); \draw[edge] (v2) to (v1);\draw[edge,bend left=15] (v2) to (v3);\draw[edge,bend left=15] (v3) to (v2);\draw[edge] (v3) to (v6);\draw[edge] (v1) to (v4);\draw[edge] (v1) to (s);\draw[edge] (v2) to (s);\draw[edge] (v3) to (s);
    \draw[dotted,thick,->] (\x+1.35,\y+0.5) to (\x+1.65,\y+0.5);
        \def\x{9}\def\y{0}
    \node[draw,circle,inner sep=1.5pt,fill=myorange!25] (v1) at (-1+\x,0+\y) {$2$};
    \node[draw,circle,inner sep=1.5pt] (v2) at (0+\x,0+\y) {$1$};
    \node[draw,circle,inner sep=1.5pt] (v3) at (1+\x,0+\y) {$0$};
    \node[draw,circle,inner sep=1.5pt] (v4) at (-1+\x,1+\y) {$0$};
    \node[draw,circle,inner sep=1.5pt] (v5) at (0+\x,1+\y) {$0$};
    \node[draw,circle,inner sep=1.5pt,fill=myorange!50] (v6) at (1+\x,1+\y) {$1$};
    \node[draw,circle,inner sep=1.5pt] (s) at (0+\x,-1+\y) {$\bullet$};
    \draw[edge] (v6) to (v5);\draw[edge] (v4) to (v5);\draw[edge] (v5) to (v2); \draw[edge] (v2) to (v1);\draw[edge,bend left=15] (v2) to (v3);\draw[edge,bend left=15] (v3) to (v2);\draw[edge] (v3) to (v6);\draw[edge] (v1) to (v4);\draw[edge] (v1) to (s);\draw[edge] (v2) to (s);\draw[edge] (v3) to (s);
    \draw[dotted,thick,->] (\x,\y-1.2) to (\x,\y-1.7);
        \def\x{9}\def\y{-3}
    \node[draw,circle,inner sep=1.5pt,fill=myorange!50] (v1) at (-1+\x,0+\y) {$2$};
    \node[draw,circle,inner sep=1.5pt] (v2) at (0+\x,0+\y) {$1$};
    \node[draw,circle,inner sep=1.5pt] (v3) at (1+\x,0+\y) {$0$};
    \node[draw,circle,inner sep=1.5pt] (v4) at (-1+\x,1+\y) {$0$};
    \node[draw,circle,inner sep=1.5pt,fill=myorange!25] (v5) at (0+\x,1+\y) {$1$};
    \node[draw,circle,inner sep=1.5pt] (v6) at (1+\x,1+\y) {$0$};
    \node[draw,circle,inner sep=1.5pt] (s) at (0+\x,-1+\y) {$\bullet$};
    \draw[edge] (v6) to (v5);\draw[edge] (v4) to (v5);\draw[edge] (v5) to (v2); \draw[edge] (v2) to (v1);\draw[edge,bend left=15] (v2) to (v3);\draw[edge,bend left=15] (v3) to (v2);\draw[edge] (v3) to (v6);\draw[edge] (v1) to (v4);\draw[edge] (v1) to (s);\draw[edge] (v2) to (s);\draw[edge] (v3) to (s);
        \def\x{6}\def\y{-3}
    \node[draw,circle,inner sep=1.5pt] (v1) at (-1+\x,0+\y) {$0$};
    \node[draw,circle,inner sep=1.5pt] (v2) at (0+\x,0+\y) {$1$};
    \node[draw,circle,inner sep=1.5pt] (v3) at (1+\x,0+\y) {$0$};
    \node[draw,circle,inner sep=1.5pt,fill=myorange!50] (v4) at (-1+\x,1+\y) {$1$};
    \node[draw,circle,inner sep=1.5pt,fill=myorange!25] (v5) at (0+\x,1+\y) {$1$};
    \node[draw,circle,inner sep=1.5pt] (v6) at (1+\x,1+\y) {$0$};
    \node[draw,circle,inner sep=1.5pt] (s) at (0+\x,-1+\y) {$\bullet$};
    \draw[edge] (v6) to (v5);\draw[edge] (v4) to (v5);\draw[edge] (v5) to (v2); \draw[edge] (v2) to (v1);\draw[edge,bend left=15] (v2) to (v3);\draw[edge,bend left=15] (v3) to (v2);\draw[edge] (v3) to (v6);\draw[edge] (v1) to (v4);\draw[edge] (v1) to (s);\draw[edge] (v2) to (s);\draw[edge] (v3) to (s);
    \draw[dotted,thick,->] (\x+1.65,\y+0.5) to (\x+1.35,\y+0.5);
        \def\x{3}\def\y{-3}
    \node[draw,circle,inner sep=1.5pt] (v1) at (-1+\x,0+\y) {$0$};
    \node[draw,circle,inner sep=1.5pt] (v2) at (0+\x,0+\y) {$1$};
    \node[draw,circle,inner sep=1.5pt] (v3) at (1+\x,0+\y) {$0$};
    \node[draw,circle,inner sep=1.5pt] (v4) at (-1+\x,1+\y) {$0$};
    \node[draw,circle,inner sep=1.5pt,fill=myorange!50] (v5) at (0+\x,1+\y) {$2$};
    \node[draw,circle,inner sep=1.5pt] (v6) at (1+\x,1+\y) {$0$};
    \node[draw,circle,inner sep=1.5pt] (s) at (0+\x,-1+\y) {$\bullet$};
    \draw[edge] (v6) to (v5);\draw[edge] (v4) to (v5);\draw[edge] (v5) to (v2); \draw[edge] (v2) to (v1);\draw[edge,bend left=15] (v2) to (v3);\draw[edge,bend left=15] (v3) to (v2);\draw[edge] (v3) to (v6);\draw[edge] (v1) to (v4);\draw[edge] (v1) to (s);\draw[edge] (v2) to (s);\draw[edge] (v3) to (s);
    \draw[dotted,thick,->] (\x+1.65,\y+0.5) to (\x+1.35,\y+0.5);
        \def\x{0}\def\y{-3}
    \node[draw,circle,inner sep=1.5pt] (v1) at (-1+\x,0+\y) {$0$};
    \node[draw,circle,inner sep=1.5pt] (v2) at (0+\x,0+\y) {$2$};
    \node[draw,circle,inner sep=1.5pt] (v3) at (1+\x,0+\y) {$0$};
    \node[draw,circle,inner sep=1.5pt] (v4) at (-1+\x,1+\y) {$0$};
    \node[draw,circle,inner sep=1.5pt,fill=myorange!50] (v5) at (0+\x,1+\y) {$1$};
    \node[draw,circle,inner sep=1.5pt] (v6) at (1+\x,1+\y) {$0$};
    \node[draw,circle,inner sep=1.5pt] (s) at (0+\x,-1+\y) {$\bullet$};
    \draw[edge] (v6) to (v5);\draw[edge] (v4) to (v5);\draw[edge] (v5) to (v2); \draw[edge] (v2) to (v1);\draw[edge,bend left=15] (v2) to (v3);\draw[edge,bend left=15] (v3) to (v2);\draw[edge] (v3) to (v6);\draw[edge] (v1) to (v4);\draw[edge] (v1) to (s);\draw[edge] (v2) to (s);\draw[edge] (v3) to (s);
    \draw[dotted,thick,->] (\x+1.65,\y+0.5) to (\x+1.35,\y+0.5);
    \draw[dotted,thick,->] (\x+.8,\y-.8) to (\x+1.5,\y-1.5);
        \def\x{3}\def\y{-5.8}
    \node[draw,circle,inner sep=1.5pt] (v1) at (-1+\x,0+\y) {$0$};
    \node[draw,circle,inner sep=1.5pt,fill=myorange!50] (v2) at (0+\x,0+\y) {$3$};
    \node[draw,circle,inner sep=1.5pt] (v3) at (1+\x,0+\y) {$0$};
    \node[draw,circle,inner sep=1.5pt] (v4) at (-1+\x,1+\y) {$0$};
    \node[draw,circle,inner sep=1.5pt] (v5) at (0+\x,1+\y) {$0$};
    \node[draw,circle,inner sep=1.5pt] (v6) at (1+\x,1+\y) {$0$};
    \node[draw,circle,inner sep=1.5pt] (s) at (0+\x,-1+\y) {$\bullet$};
    \draw[edge] (v6) to (v5);\draw[edge] (v4) to (v5);\draw[edge] (v5) to (v2); \draw[edge] (v2) to (v1);\draw[edge,bend left=15] (v2) to (v3);\draw[edge,bend left=15] (v3) to (v2);\draw[edge] (v3) to (v6);\draw[edge] (v1) to (v4);\draw[edge] (v1) to (s);\draw[edge] (v2) to (s);\draw[edge] (v3) to (s);
    \draw[dotted,thick,->]  (\x+1.35,\y+0.5) to (\x+1.65,\y+0.5);
        \def\x{6}\def\y{-5.8}
    \node[draw,circle,inner sep=1.5pt] (v1) at (-1+\x,0+\y) {$1$};
    \node[draw,circle,inner sep=1.5pt] (v2) at (0+\x,0+\y) {$0$};
    \node[draw,circle,inner sep=1.5pt] (v3) at (1+\x,0+\y) {$1$};
    \node[draw,circle,inner sep=1.5pt] (v4) at (-1+\x,1+\y) {$0$};
    \node[draw,circle,inner sep=1.5pt] (v5) at (0+\x,1+\y) {$0$};
    \node[draw,circle,inner sep=1.5pt] (v6) at (1+\x,1+\y) {$0$};
    \node[draw,circle,inner sep=1.5pt] (s) at (0+\x,-1+\y) {$\bullet$};
    \draw[edge] (v6) to (v5);\draw[edge] (v4) to (v5);\draw[edge] (v5) to (v2); \draw[edge] (v2) to (v1);\draw[edge,bend left=15] (v2) to (v3);\draw[edge,bend left=15] (v3) to (v2);\draw[edge] (v3) to (v6);\draw[edge] (v1) to (v4);\draw[edge] (v1) to (s);\draw[edge] (v2) to (s);\draw[edge] (v3) to (s);
    \end{tikzpicture}
\end{center}
\caption{An example of the sandpile dynamics on a digraph. The initial configuration is given at the top left, and the firings proceed via the dotted arrows. At each stage the unstable vertices are given in orange, with the vertex which is chosen to fire given by the darker shade.}
\label{fig:sandpile}
\end{figure}

See Figure~\ref{fig:sandpile} for an example of the ensuing process. 

The sandpile dynamics is conveniently captured by the \textit{Laplacian} $$\Delta(G)=D_\mathrm{out}(G) - A(G),$$ where \(A(G)\) is the adjacency matrix of \(G\) and \(D_\mathrm{out}(G)\) is the diagonal matrix with values \(\mathrm{outdeg}(v)\) on the diagonal, respective to the vertex ordering of \(A(G)\). Going from a configuration \(\bc_t\) to \(\bc_{t+1}\) by firing a vertex \(v_i\) is then given by
\[\bc_{t+1} = \bc_t - \Delta^T e_i,\]
where $\Delta=\Delta(G)$ and \(e_i\) is the standard basis vector with value 1 in position \(i\) and zeros elsewhere. 

In Definition~\ref{defn:sandpile} each firing is determined by the choice of an unstable vertex. It is also possible to fire simultaneously all vertices \(\sigma_t \subseteq V\) that are unstable at a time step \(t\); such firing is usually referred to in the literature as \emph{cluster} or \emph{parallel firing} \cite{Klivans, Prisner}. See Figure \ref{fig:sandpile_parallel} for an example. This dynamics is again conveniently given by
\begin{equation}\label{eq:parallel_firing_dynamics}
    \bc_{t+1} = \bc_t - \Delta^T \chi_{\sigma_t},
\end{equation}
where \(\chi_{\sigma_t} = \sum_{j} e_j\) and \(e_j\) is the standard basis vector corresponding to the vertex \(v_j \in \sigma_t\). We capture the parallel firing dynamics of \eqref{eq:parallel_firing_dynamics} for a given digraph and an initial configuration in the following definition.

\begin{definition}\label{def:Sp_dynamics}
    Let \(\bc_0\) denote the initial configuration on a digraph \(G\), and consider the sandpile model on $G$ using the parallel firing procedure. Define
    \[\Sp(G,\bc_0) = \{(\bc_t,\sigma_t)\}_{t\in\N},\]
    where \(\sigma_t \subseteq V\) is the \emph{firing set} of unstable vertices of configuration $\bc_t$, i.e.\ the vertices which fire at time \(t\) taking \(\bc_t\) to \(\bc_{t+1}\). 
\end{definition}
\begin{figure}
\begin{center}\begin{tikzpicture}[scale=1.1]
    \tikzstyle{edge}=[shorten >= 0pt,shorten <= 0pt,->, thick]
    \tikzstyle{uedge}=[shorten >= 0pt,shorten <= 0pt, thick]
        \def\x{0}\def\y{0}
    \node[draw,circle,inner sep=1.5pt] (v1) at (-1+\x,0+\y) {$1$};
    \node[draw,circle,inner sep=1.5pt] (v2) at (0+\x,0+\y) {$2$};
    \node[draw,circle,inner sep=1.5pt] (v3) at (1+\x,0+\y) {$2$};
    \node[draw,circle,inner sep=1.5pt] (v4) at (-1+\x,1+\y) {$0$};
    \node[draw,circle,inner sep=1.5pt,fill=myorange!50] (v5) at (0+\x,1+\y) {$1$};
    \node[draw,circle,inner sep=1.5pt] (v6) at (1+\x,1+\y) {$0$};
    \node[draw,circle,inner sep=1.5pt] (s) at (0+\x,-1+\y) {$\bullet$};
    \draw[edge] (v6) to (v5);\draw[edge] (v4) to (v5);\draw[edge] (v5) to (v2); \draw[edge] (v2) to (v1);\draw[edge,bend left=15] (v2) to (v3);\draw[edge,bend left=15] (v3) to (v2);\draw[edge] (v3) to (v6);\draw[edge] (v1) to (v4);\draw[edge] (v1) to (s);\draw[edge] (v2) to (s);\draw[edge] (v3) to (s);
    \draw[dotted,thick,->] (\x+1.35,\y+0.5) to (\x+1.65,\y+0.5);
        \def\x{3}\def\y{0}
    \node[draw,circle,inner sep=1.5pt] (v1) at (-1+\x,0+\y) {$1$};
    \node[draw,circle,inner sep=1.5pt,fill=myorange!50] (v2) at (0+\x,0+\y) {$3$};
    \node[draw,circle,inner sep=1.5pt] (v3) at (1+\x,0+\y) {$2$};
    \node[draw,circle,inner sep=1.5pt] (v4) at (-1+\x,1+\y) {$0$};
    \node[draw,circle,inner sep=1.5pt] (v5) at (0+\x,1+\y) {$0$};
    \node[draw,circle,inner sep=1.5pt] (v6) at (1+\x,1+\y) {$0$};
    \node[draw,circle,inner sep=1.5pt] (s) at (0+\x,-1+\y) {$\bullet$};
    \draw[edge] (v6) to (v5);\draw[edge] (v4) to (v5);\draw[edge] (v5) to (v2); \draw[edge] (v2) to (v1);\draw[edge,bend left=15] (v2) to (v3);\draw[edge,bend left=15] (v3) to (v2);\draw[edge] (v3) to (v6);\draw[edge] (v1) to (v4);\draw[edge] (v1) to (s);\draw[edge] (v2) to (s);\draw[edge] (v3) to (s);
    \draw[dotted,thick,->] (\x+1.35,\y+0.5) to (\x+1.65,\y+0.5);
        \def\x{6}\def\y{0}
    \node[draw,circle,inner sep=1.5pt,fill=myorange!50] (v1) at (-1+\x,0+\y) {$2$};
    \node[draw,circle,inner sep=1.5pt] (v2) at (0+\x,0+\y) {$0$};
    \node[draw,circle,inner sep=1.5pt,fill=myorange!50] (v3) at (1+\x,0+\y) {$3$};
    \node[draw,circle,inner sep=1.5pt] (v4) at (-1+\x,1+\y) {$0$};
    \node[draw,circle,inner sep=1.5pt] (v5) at (0+\x,1+\y) {$0$};
    \node[draw,circle,inner sep=1.5pt] (v6) at (1+\x,1+\y) {$0$};
    \node[draw,circle,inner sep=1.5pt] (s) at (0+\x,-1+\y) {$\bullet$};
    \draw[edge] (v6) to (v5);\draw[edge] (v4) to (v5);\draw[edge] (v5) to (v2); \draw[edge] (v2) to (v1);\draw[edge,bend left=15] (v2) to (v3);\draw[edge,bend left=15] (v3) to (v2);\draw[edge] (v3) to (v6);\draw[edge] (v1) to (v4);\draw[edge] (v1) to (s);\draw[edge] (v2) to (s);\draw[edge] (v3) to (s);
    \draw[dotted,thick,->] (\x+1.35,\y+0.5) to (\x+1.65,\y+0.5);
        \def\x{9}\def\y{0}
    \node[draw,circle,inner sep=1.5pt] (v1) at (-1+\x,0+\y) {$0$};
    \node[draw,circle,inner sep=1.5pt] (v2) at (0+\x,0+\y) {$1$};
    \node[draw,circle,inner sep=1.5pt] (v3) at (1+\x,0+\y) {$0$};
    \node[draw,circle,inner sep=1.5pt,fill=myorange!50] (v4) at (-1+\x,1+\y) {$1$};
    \node[draw,circle,inner sep=1.5pt] (v5) at (0+\x,1+\y) {$0$};
    \node[draw,circle,inner sep=1.5pt,fill=myorange!50] (v6) at (1+\x,1+\y) {$1$};
    \node[draw,circle,inner sep=1.5pt] (s) at (0+\x,-1+\y) {$\bullet$};
    \draw[edge] (v6) to (v5);\draw[edge] (v4) to (v5);\draw[edge] (v5) to (v2); \draw[edge] (v2) to (v1);\draw[edge,bend left=15] (v2) to (v3);\draw[edge,bend left=15] (v3) to (v2);\draw[edge] (v3) to (v6);\draw[edge] (v1) to (v4);\draw[edge] (v1) to (s);\draw[edge] (v2) to (s);\draw[edge] (v3) to (s);
    \draw[dotted,thick,->] (\x,\y-1.2) to (\x,\y-1.7);
        \def\x{9}\def\y{-3}
    \node[draw,circle,inner sep=1.5pt] (v1) at (-1+\x,0+\y) {$0$};
    \node[draw,circle,inner sep=1.5pt] (v2) at (0+\x,0+\y) {$1$};
    \node[draw,circle,inner sep=1.5pt] (v3) at (1+\x,0+\y) {$0$};
    \node[draw,circle,inner sep=1.5pt] (v4) at (-1+\x,1+\y) {$0$};
    \node[draw,circle,inner sep=1.5pt,fill=myorange!50] (v5) at (0+\x,1+\y) {$2$};
    \node[draw,circle,inner sep=1.5pt] (v6) at (1+\x,1+\y) {$0$};
    \node[draw,circle,inner sep=1.5pt] (s) at (0+\x,-1+\y) {$\bullet$};
    \draw[edge] (v6) to (v5);\draw[edge] (v4) to (v5);\draw[edge] (v5) to (v2); \draw[edge] (v2) to (v1);\draw[edge,bend left=15] (v2) to (v3);\draw[edge,bend left=15] (v3) to (v2);\draw[edge] (v3) to (v6);\draw[edge] (v1) to (v4);\draw[edge] (v1) to (s);\draw[edge] (v2) to (s);\draw[edge] (v3) to (s);
    %\draw[dotted,->] (\x+1.65,\y+0.5) to (\x+1.35,\y+0.5);
        \def\x{6}\def\y{-3}
    \node[draw,circle,inner sep=1.5pt] (v1) at (-1+\x,0+\y) {$0$};
    \node[draw,circle,inner sep=1.5pt] (v2) at (0+\x,0+\y) {$2$};
    \node[draw,circle,inner sep=1.5pt] (v3) at (1+\x,0+\y) {$0$};
    \node[draw,circle,inner sep=1.5pt] (v4) at (-1+\x,1+\y) {$0$};
    \node[draw,circle,inner sep=1.5pt,fill=myorange!50] (v5) at (0+\x,1+\y) {$1$};
    \node[draw,circle,inner sep=1.5pt] (v6) at (1+\x,1+\y) {$0$};
    \node[draw,circle,inner sep=1.5pt] (s) at (0+\x,-1+\y) {$\bullet$};
    \draw[edge] (v6) to (v5);\draw[edge] (v4) to (v5);\draw[edge] (v5) to (v2); \draw[edge] (v2) to (v1);\draw[edge,bend left=15] (v2) to (v3);\draw[edge,bend left=15] (v3) to (v2);\draw[edge] (v3) to (v6);\draw[edge] (v1) to (v4);\draw[edge] (v1) to (s);\draw[edge] (v2) to (s);\draw[edge] (v3) to (s);
    \draw[dotted,thick,->] (\x+1.65,\y+0.5) to (\x+1.35,\y+0.5);
        \def\x{3}\def\y{-3}
    \node[draw,circle,inner sep=1.5pt] (v1) at (-1+\x,0+\y) {$0$};
    \node[draw,circle,inner sep=1.5pt,fill=myorange!50] (v2) at (0+\x,0+\y) {$3$};
    \node[draw,circle,inner sep=1.5pt] (v3) at (1+\x,0+\y) {$0$};
    \node[draw,circle,inner sep=1.5pt] (v4) at (-1+\x,1+\y) {$0$};
    \node[draw,circle,inner sep=1.5pt] (v5) at (0+\x,1+\y) {$0$};
    \node[draw,circle,inner sep=1.5pt] (v6) at (1+\x,1+\y) {$0$};
    \node[draw,circle,inner sep=1.5pt] (s) at (0+\x,-1+\y) {$\bullet$};
    \draw[edge] (v6) to (v5);\draw[edge] (v4) to (v5);\draw[edge] (v5) to (v2); \draw[edge] (v2) to (v1);\draw[edge,bend left=15] (v2) to (v3);\draw[edge,bend left=15] (v3) to (v2);\draw[edge] (v3) to (v6);\draw[edge] (v1) to (v4);\draw[edge] (v1) to (s);\draw[edge] (v2) to (s);\draw[edge] (v3) to (s);
    \draw[dotted,thick,->]  (\x+1.65,\y+0.5) to (\x+1.35,\y+0.5);
        \def\x{0}\def\y{-3}
    \node[draw,circle,inner sep=1.5pt] (v1) at (-1+\x,0+\y) {$1$};
    \node[draw,circle,inner sep=1.5pt] (v2) at (0+\x,0+\y) {$0$};
    \node[draw,circle,inner sep=1.5pt] (v3) at (1+\x,0+\y) {$1$};
    \node[draw,circle,inner sep=1.5pt] (v4) at (-1+\x,1+\y) {$0$};
    \node[draw,circle,inner sep=1.5pt] (v5) at (0+\x,1+\y) {$0$};
    \node[draw,circle,inner sep=1.5pt] (v6) at (1+\x,1+\y) {$0$};
    \node[draw,circle,inner sep=1.5pt] (s) at (0+\x,-1+\y) {$\bullet$};
    \draw[edge] (v6) to (v5);\draw[edge] (v4) to (v5);\draw[edge] (v5) to (v2); \draw[edge] (v2) to (v1);\draw[edge,bend left=15] (v2) to (v3);\draw[edge,bend left=15] (v3) to (v2);\draw[edge] (v3) to (v6);\draw[edge] (v1) to (v4);\draw[edge] (v1) to (s);\draw[edge] (v2) to (s);\draw[edge] (v3) to (s);
    \draw[dotted,thick,->]  (\x+1.65,\y+0.5) to (\x+1.35,\y+0.5);
    \end{tikzpicture}
\end{center}
\caption{The example from Figure~\ref{fig:sandpile} repeated using parallel firing. We see that the stable configuration is the same for both.}
\label{fig:sandpile_parallel}
\end{figure}
A \emph{sink} is a vertex $s$ of $G$ with no out-neighbours, i.e.\ \(\mathrm{outdeg}(s)=0\). Such a vertex would always trivially fire in the sandpile model. As such, sink vertices are often treated as special vertices in the sandpile model, which do not fire but instead absorb grains and remove them from the dynamics. \emph{In this paper we explicitly do not consider sink vertices to belong to the firing sets \(\sigma_t\).} We depict sink vertices by~\raisebox{-3pt}{\begin{tikzpicture} \node[draw,circle,inner sep=1.5pt] (s) at (0,0) {$\bullet$}; \end{tikzpicture}}.

A foundational result for the sandpile on undirected graphs states that if a sink is present the sandpile model will always stabilise \cite[Proposition 2.5.2]{Klivans}. A similar result holds for digraphs, with the stronger requirement that there is a directed path from every vertex to a sink. In this paper we do not require that the dynamics stabilises, and allow for it to continue infinitely.

\subsection{Avalanche complex and avalanche homology}
We now associate a simplicial homology to the (parallel firing) sandpile dynamics of a digraph. Recall that for any set \(V\) and any finite collection of subsets \(F = \{\sigma \ | \ \sigma \subset V\}\), the simplicial complex \emph{generated} by $F$ is obtained by downwards closing $F$, that is, taking the set of all subsets of the sets in $F$. We let \(\sigma \in \Sp(G,\bc_0)\) mean that the set of vertices \(\sigma\) fires at some time step \(t\) in the dynamics of Definition \ref{def:Sp_dynamics}.

\begin{definition}\label{def:av_complex}
    The \emph{avalanche complex} of \(G\) with respect to the initial configuration \(\bc_0\), denoted \(\av(G,\bc_0)\), is generated by the firing sets \(\sigma \in \Sp(G,\bc_0)\). The \emph{avalanche homology} is the simplicial homology of \(\av(G,\bc_0)\).
\end{definition}

\begin{figure}[t]
    \begin{center}
    \begin{tikzpicture}
    \tikzstyle{edge}=[shorten >= 6pt,shorten <= 6pt,->, thick]
    \tikzstyle{point}=[circle,thick,draw=black,fill=black,inner sep=0pt,minimum width=2pt,minimum height=2pt]
    \def\s{-1}
    \def\y{-9}
    \node at (-.75,\s*1) {$v_1$};
    \node at (-.75,\s*2) {$v_2$};
    \node at (-.75,\s*3) {$v_3$};
    \node at (-.75,\s*4) {$v_4$};
    \node at (-.75,\s*5) {$v_5$};
        \def\x{0}
    \node at (0+\x,0.5*\s){\small$t=0$};
    \node[draw,circle,inner sep=1.5pt,fill=myorange!50] at (0+\x,\s*1) {1};
    \node[draw,circle,inner sep=1.5pt,fill=myorange!50] at (0+\x,\s*2) {1};
    \node[draw,circle,inner sep=1.5pt] at (0+\x,\s*3) {0};
    \node[draw,circle,inner sep=1.5pt,fill=myorange!50] at (0+\x,\s*4) {1};
    \node[draw,circle,inner sep=1.5pt] at (0+\x,\s*5) {0};
    \node[draw,circle,inner sep=1.5pt] at (0+\x,\s*6) {$\bullet$};
    \draw[edge] (0+\x,\s*1) to (0+\x,\s*2);
    \draw[edge] (0+\x,\s*2) to (0+\x,\s*3);
    \draw[edge] (0+\x,\s*3) to (0+\x,\s*4);
    \draw[edge] (0+\x,\s*4) to (0+\x,\s*5);
    \draw[edge] (0+\x,\s*5) to (0+\x,\s*6);
    \coordinate (a) at (-0.8+\x,1+\y);
    \coordinate (c) at (0.6+\x,0.75+\y);
    \coordinate (d) at (-0.05+\x,-0.5+\y);
    \draw[white,line width=5pt,fill=myorange!50] (d) -- (a) -- (c) --cycle;
    \draw[] (a) -- (c);
    \draw[] (d) -- (a);
    \draw[] (d) -- (c);	
    \node[point] at (a) {};
    \node[point] at (d) {};
    \node[point] at (c) {};
        \def\x{2}
    \node at (0+\x,0.5*\s){\small$t=1$};
    \node[draw,circle,inner sep=1.5pt] at (0+\x,\s*1) {0};
    \node[draw,circle,inner sep=1.5pt,fill=myorange!50] at (0+\x,\s*2) {1};
    \node[draw,circle,inner sep=1.5pt,fill=myorange!50] at (0+\x,\s*3) {1};
    \node[draw,circle,inner sep=1.5pt] at (0+\x,\s*4) {0};
    \node[draw,circle,inner sep=1.5pt,fill=myorange!50] at (0+\x,\s*5) {1};
    \node[draw,circle,inner sep=1.5pt] at (0+\x,\s*6) {$\bullet$};
    \draw[edge] (0+\x,\s*1) to (0+\x,\s*2);
    \draw[edge] (0+\x,\s*2) to (0+\x,\s*3);
    \draw[edge] (0+\x,\s*3) to (0+\x,\s*4);
    \draw[edge] (0+\x,\s*4) to (0+\x,\s*5);
    \draw[edge] (0+\x,\s*5) to (0+\x,\s*6);
    \coordinate (a) at (-0.8+\x,1+\y);
    \coordinate (b) at (-0.5+\x,2.3+\y);
    \coordinate (c) at (0.6+\x,0.75+\y);
    \coordinate (d) at (-0.05+\x,-0.5+\y);
    \coordinate (e) at (0.7+\x,2.25+\y);
    \draw[white,line width=5pt,fill=myblue,fill opacity=0.2] (d) -- (a) -- (c) --cycle;
    \draw[white,line width=5pt,fill=myorange!50] (e) -- (c) -- (b) --cycle;
    \draw[] (c) -- (b);
    \draw[] (a) -- (c);
    \draw[] (d) -- (a);
    \draw[] (d) -- (c);	
    \draw[] (b) -- (e);
    \draw[] (c) -- (e);
    \node[point] at (a) {};
    \node[point] at (d) {};
    \node[point] at (c) {};
    \node[point] at (b) {};
    \node[point] at (e) {};
        \def\x{4}
    \node at (0+\x,0.5*\s){\small$t=2$};
    \node[draw,circle,inner sep=1.5pt] at (0+\x,\s*1) {0};
    \node[draw,circle,inner sep=1.5pt] at (0+\x,\s*2) {0};
    \node[draw,circle,inner sep=1.5pt,fill=myorange!50] at (0+\x,\s*3) {1};
    \node[draw,circle,inner sep=1.5pt,fill=myorange!50] at (0+\x,\s*4) {1};
    \node[draw,circle,inner sep=1.5pt] at (0+\x,\s*5) {0};
    \node[draw,circle,inner sep=1.5pt] at (0+\x,\s*6) {$\bullet$};
    \draw[edge] (0+\x,\s*1) to (0+\x,\s*2);
    \draw[edge] (0+\x,\s*2) to (0+\x,\s*3);
    \draw[edge] (0+\x,\s*3) to (0+\x,\s*4);
    \draw[edge] (0+\x,\s*4) to (0+\x,\s*5);
    \draw[edge] (0+\x,\s*5) to (0+\x,\s*6);
    \coordinate (a) at (-0.8+\x,1+\y);
    \coordinate (b) at (-0.5+\x,2.3+\y);
    \coordinate (c) at (0.6+\x,0.75+\y);
    \coordinate (d) at (-0.05+\x,-0.5+\y);
    \coordinate (e) at (0.7+\x,2.25+\y);
    \draw[white,line width=5pt,fill=myblue,fill opacity=0.2] (d) -- (a) -- (c) --cycle;
    \draw[white,line width=5pt,fill=myblue,fill opacity=0.2] (e) -- (c) -- (b) --cycle;
    \draw[] (c) -- (b);
    \draw[myorange!50,thick] (b) -- (a);
    \draw[] (a) -- (c);
    \draw[] (d) -- (a);
    \draw[] (d) -- (c);	
    \draw[] (b) -- (e);
    \draw[] (c) -- (e);
    \node[point] at (a) {};
    \node[point] at (d) {};
    \node[point] at (c) {};
    \node[point] at (b) {};
    \node[point] at (e) {};
        \def\x{6}
    \node at (0+\x,0.5*\s){\small$t=3$};
    \node[draw,circle,inner sep=1.5pt] at (0+\x,\s*1) {0};
    \node[draw,circle,inner sep=1.5pt] at (0+\x,\s*2) {0};
    \node[draw,circle,inner sep=1.5pt] at (0+\x,\s*3) {0};
    \node[draw,circle,inner sep=1.5pt,fill=myorange!50] at (0+\x,\s*4) {1};
    \node[draw,circle,inner sep=1.5pt,fill=myorange!50] at (0+\x,\s*5) {1};
    \node[draw,circle,inner sep=1.5pt] at (0+\x,\s*6) {$\bullet$};
    \draw[edge] (0+\x,\s*1) to (0+\x,\s*2);
    \draw[edge] (0+\x,\s*2) to (0+\x,\s*3);
    \draw[edge] (0+\x,\s*3) to (0+\x,\s*4);
    \draw[edge] (0+\x,\s*4) to (0+\x,\s*5);
    \draw[edge] (0+\x,\s*5) to (0+\x,\s*6);
    \coordinate (a) at (-0.8+\x,1+\y);
    \coordinate (b) at (-0.5+\x,2.3+\y);
    \coordinate (c) at (0.6+\x,0.75+\y);
    \coordinate (d) at (-0.05+\x,-0.5+\y);
    \coordinate (e) at (0.7+\x,2.25+\y);
    \draw[white,line width=5pt,fill=myblue,fill opacity=0.2] (d) -- (a) -- (c) --cycle;
    \draw[white,line width=5pt,fill=myblue,fill opacity=0.2] (e) -- (c) -- (b) --cycle;
    \draw[] (c) -- (b);
    \draw[white,line width=5pt] (a) -- (e);
    \draw[] (b) -- (a);
    \draw[] (a) -- (c);
    \draw[] (d) -- (a);
    \draw[] (d) -- (c);	
    \draw[] (b) -- (e);
    \draw[] (c) -- (e);
    \draw[myorange!50,thick] (a) -- (e);
    \node[point] at (a) {};
    \node[point] at (d) {};
    \node[point] at (c) {};
    \node[point] at (b) {};
    \node[point] at (e) {};
        \def\x{8}
    \node at (0+\x,0.5*\s){\small$t=4$};
    \node[draw,circle,inner sep=1.5pt] at (0+\x,\s*1) {0};
    \node[draw,circle,inner sep=1.5pt] at (0+\x,\s*2) {0};
    \node[draw,circle,inner sep=1.5pt] at (0+\x,\s*3) {0};
    \node[draw,circle,inner sep=1.5pt] at (0+\x,\s*4) {0};
    \node[draw,circle,inner sep=1.5pt,fill=myorange!50] at (0+\x,\s*5) {1};
    \node[draw,circle,inner sep=1.5pt] at (0+\x,\s*6) {$\bullet$};
    \draw[edge] (0+\x,\s*1) to (0+\x,\s*2);
    \draw[edge] (0+\x,\s*2) to (0+\x,\s*3);
    \draw[edge] (0+\x,\s*3) to (0+\x,\s*4);
    \draw[edge] (0+\x,\s*4) to (0+\x,\s*5);
    \draw[edge] (0+\x,\s*5) to (0+\x,\s*6);
    \coordinate (a) at (-0.8+\x,1+\y);
    \coordinate (b) at (-0.5+\x,2.3+\y);
    \coordinate (c) at (0.6+\x,0.75+\y);
    \coordinate (d) at (-0.05+\x,-0.5+\y);
    \coordinate (e) at (0.7+\x,2.25+\y);
    \draw[white,line width=5pt,fill=myblue,fill opacity=0.2] (d) -- (a) -- (c) --cycle;
    \draw[white,line width=5pt,fill=myblue,fill opacity=0.2] (e) -- (c) -- (b) --cycle;
    \draw[] (c) -- (b);
    \draw[white,line width=5pt] (a) -- (e);
    \draw[] (b) -- (a);
    \draw[] (a) -- (c);
    \draw[] (d) -- (a);
    \draw[] (d) -- (c);	
    \draw[] (b) -- (e);
    \draw[] (c) -- (e);
    \draw[] (a) -- (e);
    \node[point] at (a) {};
    \node[point] at (d) {};
    \node[point] at (c) {};
    \node[point] at (b) {};
    \node[point] at (e) {};
        \def\x{10}
    \node at (0+\x,0.5*\s){\small$t=5$};
    \node[draw,circle,inner sep=1.5pt] at (0+\x,\s*1) {0};
    \node[draw,circle,inner sep=1.5pt] at (0+\x,\s*2) {0};
    \node[draw,circle,inner sep=1.5pt] at (0+\x,\s*3) {0};
    \node[draw,circle,inner sep=1.5pt] at (0+\x,\s*4) {0};
    \node[draw,circle,inner sep=1.5pt] at (0+\x,\s*5) {0};
    \node[draw,circle,inner sep=1.5pt] at (0+\x,\s*6) {$\bullet$};
    \draw[edge] (0+\x,\s*1) to (0+\x,\s*2);
    \draw[edge] (0+\x,\s*2) to (0+\x,\s*3);
    \draw[edge] (0+\x,\s*3) to (0+\x,\s*4);
    \draw[edge] (0+\x,\s*4) to (0+\x,\s*5);
    \draw[edge] (0+\x,\s*5) to (0+\x,\s*6);
    \coordinate (a) at (-0.8+\x,1+\y);
    \coordinate (b) at (-0.5+\x,2.3+\y);
    \coordinate (c) at (0.6+\x,0.75+\y);
    \coordinate (d) at (-0.05+\x,-0.5+\y);
    \coordinate (e) at (0.7+\x,2.25+\y);
    \draw[white,line width=5pt,fill=myblue,fill opacity=0.2] (d) -- (a) -- (c) --cycle;
    \draw[white,line width=5pt,fill=myblue,fill opacity=0.2] (e) -- (c) -- (b) --cycle;
    \draw[] (c) -- (b);
    \draw[white,line width=5pt] (a) -- (e);
    \draw[] (b) -- (a);
    \draw[] (a) -- (c);
    \draw[] (d) -- (a);
    \draw[] (d) -- (c);	
    \draw[] (b) -- (e);
    \draw[] (c) -- (e);
    \draw[] (a) -- (e);
    \node[left] at (-0.7+\x,0.9+\y) {$v_4$};
    \node[right] at (d) {$v_1$};
    \node[right] at (c) {$v_2$};
    \node[right] at (e) {$v_5$};
    \node[left] at (b) {$v_3$};
    \node[point] at (a) {};
    \node[point] at (d) {};
    \node[point] at (c) {};
    \node[point] at (b) {};
    \node[point] at (e) {};
    \node at (\x,\y-1.1){$\av(P_5,\bc_0)$};
    \end{tikzpicture}
    \end{center}
\caption{Parallel firing on the directed path graph \(P_5\), with an additional sink vertex at the end, and initial configuration \(\bc_0=(1,1,0,1,0)\). Each step of the resulting avalanche is shown, and the corresponding avalanche complex is underneath with any new simplex shown in orange. The resulting avalanche complex is given in the bottom right. Thus, Betti numbers of the avalanche homology are $\beta_0=1$ and~$\beta_1=2$.}\label{fig:P_5_is_S1_vee_S1}
\end{figure}
See Figure~\ref{fig:P_5_is_S1_vee_S1} for an illustration of Definition \ref{def:av_complex}. Note that if during the sandpile dynamics a subset of vertices \(\sigma \subseteq V\) fires simultaneously, then any \(\tau \subseteq \sigma\) also fires simultaneously, hence the definition of avalanche complex is well defined with respect to the underlying parallel firing model. Also note that the set of maximal firing sets with respect to inclusion is the minimal generating set for the avalanche complex, which we utilise in Section \ref{sec:results}. However, determining the maximal firing sets requires the full information of the dynamics, whereas our definition allows sequential construction; see also Section \ref{sec:per}. In the sandpile literature an avalanche means firing a vertex or a set of vertices, followed by stabilising the resulting configuration. In our context the dynamics \(\Sp(G,\bc_0)\) can either stabilise, or result in a periodic orbit which can be considered as an infinite non-stabilising avalanche.

For the avalanche complex to be finite, we require that if the dynamics does not stabilise then it enters a periodic orbit, where the same sequence of configurations keeps occurring. Note that for any finite digraph and a finite number of grains, there are only a finite number of possible configurations. If the dynamics does not stabilise some configuration must eventually occur twice resulting in a periodic orbit. 

\begin{remark}
    Note that we can take any finite subset of \(\Sp(G,\bc_0)\) including time steps from \(t=0\) to some \(t=t_n\), and generate a complex \(\av_{t_n}(G,\bc_0)\) for this finite sub-dynamics. In general such a complex will be different from \(\av(G,\bc_0)\). However, if the dynamics enters a recurring orbit, then \(\av_{t_n}(G,\bc_0) \simeq \av(G,\bc_0)\) where \(t_n\) is the last time step before a recurring configuration reappears. Such complexes will appear in Section \ref{subsec:cycles} in connection to cycle graphs. 
\end{remark}
\begin{remark}
If we were to include sink vertices in firing sets, then since every sink vertex is always unstable it would be in every firing set. Hence, sink vertices would be cone points of the avalanche complex, causing it to always be contractible.
\end{remark}

Let \(|\bc_t|\) denote the total number of grains in a configuration; note that if there is no sink, then \(|\bc_0|\) is constant during the whole dynamics. We begin with some initial observations for general digraphs. If at some time step \(t\) all the vertices \(V\) fire, the avalanche complex is the full simplex on \(V,\) yielding the following result.

\begin{lemma}\label{lem:full_avalanche_is_contractible}
    If \(V \in \Sp(G,\bc_0)\), then \(\av(G,\bc_0)\) is contractible.
\end{lemma}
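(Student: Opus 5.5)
The plan is to show that $\av(G,\bc_0)$ is the full simplex on the vertex set, and then use that a full simplex is contractible. One small issue is what ``the vertices $V$'' means when $G$ has sinks. By convention sinks never belong to firing sets, so the hypothesis $V\in\Sp(G,\bc_0)$ is read as: the set of all non-sink vertices fires at some time step $t$. Equivalently, $G$ has no sinks. In either reading, let $V'$ be the vertex set on which the avalanche complex is built. No firing set contains a sink, so every vertex of $\av(G,\bc_0)$ lies in $V'$, and the hypothesis says $V'=\sigma_t$ for some $t$.

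The main step is to use that, by Definition~\ref{def:av_complex}, $\av(G,\bc_0)$ is the downward closure of the firing sets. Since $\sigma_t=V'$ is one of the generating sets, every subset of $V'$ is a face. Conversely, every face is a subset of some firing set, and every firing set is a subset of $V'$. Hence $\av(G,\bc_0)$ is exactly the full simplex $\Delta^{|V'|-1}$ on $V'$.

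It remains to note that the geometric realisation of a full simplex is convex in Euclidean space, so it is contractible via a straight-line homotopy to any chosen vertex. Alternatively, any vertex is a cone point. Consequently the reduced avalanche homology vanishes.

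There is essentially no obstacle here. The only point needing care is the bookkeeping with sinks described above. One should also rule out the degenerate case of an empty $V'$: if $V$ is nonempty and fires, the complex is nonempty and hence a genuine simplex.
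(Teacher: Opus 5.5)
Your proposal is correct and matches the paper's argument: the paper observes that if all of $V$ fires at some time step then $\av(G,\bc_0)$ is the full simplex on $V$, hence contractible. Your bookkeeping about sinks is a reasonable clarification that the paper leaves implicit. Note, though, that your two readings of the hypothesis (all non-sink vertices fire, versus $G$ having no sinks) are alternatives rather than equivalent; either one gives the same conclusion.
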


By Lemma \ref{lem:full_avalanche_is_contractible} it could be wondered whether the avalanche complex is always contractible when \(|\bc_0| \geq |V|\). This is not generally the case.

\begin{example}\label{ex:morethann}
Consider the digraph \(G\) below with the shown initial configuration. The firing sets are 
\begin{align*}
\sigma_0 &= \{v_1\}, \ \sigma_1 = \{v_1,v_2\}, \ \sigma_2 = \{v_2,v_3\}, \ \sigma_3 = \{v_1,v_3\}\\ \sigma_4 &= \{v_2\},\sigma_5 = \{v_3\}, \ \sigma_6 = \{v_1\}, \ \sigma_7 = \{v_2\}, \ \sigma_8 = \{v_3\},
\end{align*}
after which the dynamics reaches a stable configuration.
    \begin{center}
        \begin{tikzpicture}
            \tikzstyle{point}=[circle,thick,draw=black,fill=black,inner sep=0pt,minimum width=2pt,minimum height=2pt]
            \tikzstyle{edge}=[shorten >= 6pt,shorten <= 6pt,->, thick]

            \coordinate (a) at (0,0);
            \coordinate (b) at (1,1);
            \coordinate (c) at (2,0);
            \coordinate (d) at (3.5,0);

            \node[draw,circle,inner sep=1.5pt,fill=myorange!50] at (a) {5};
            \node[draw,circle,inner sep=1.5pt] at (b) {0};
            \node[draw,circle,inner sep=1.5pt] at (c) {0};
            \node[draw,circle,inner sep=1.5pt] at (d) {$\bullet$};

            \node[above] at (0,0.2) {\(v_1\)};
            \node[right] at (1.2,1) {\(v_2\)};
            \node[above] at (2,0.2) {\(v_3\)};
            %\node[above] at (3.5,0.2) {\(v_4\)};
            
            \draw[edge] (a) to (b);
            \draw[edge,bend left=15] (a) to (c);
            \draw[edge,bend left=15] (c) to (a);
            \draw[edge] (b) to (c);
            \draw[edge] (c) to (d);
        \end{tikzpicture}
    \end{center}
The avalanche complex is homotopy equivalent to \(S^1\). Hence the topology of \(\av(G,\bc_0)\) is non-trivial despite the total number of grains exceeding the number of vertices.
\end{example}

If a configuration is stable, then no firings happen, so we get an empty complex. And if we only have a single grain of sand, the complex is similarly trivial.
\begin{lemma}\label{lem:stable}
    If $\bc_0$ is a stable configuration, then $\av(G,\bc_0)=\emptyset$.
\end{lemma}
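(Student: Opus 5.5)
The plan is to unwind the definitions directly; no earlier results are needed. By Definition~\ref{def:Sp_dynamics}, the firing set \(\sigma_t\) at time \(t\) consists of the vertices that are unstable in \(\bc_t\), excluding sink vertices by our convention. If \(\bc_0\) is stable, then no non-sink vertex \(v\) satisfies \(\bc_0(v)\ge \mathrm{outdeg}(v)\), so \(\sigma_0=\emptyset\).

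Next we argue that the dynamics is constant from time \(0\) onward. Since \(\sigma_0=\emptyset\), equation \eqref{eq:parallel_firing_dynamics} gives \(\bc_1=\bc_0-\Delta^T\chi_{\emptyset}=\bc_0\). By induction \(\bc_t=\bc_0\) and \(\sigma_t=\emptyset\) for all \(t\). Equivalently, the process terminates immediately as in Definition~\ref{defn:sandpile}.

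Thus the only set in \(\Sp(G,\bc_0)\) is the empty set. The simplicial complex generated by it has no vertices, so \(\av(G,\bc_0)=\emptyset\), as the void or empty complex depending on convention.

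The only point needing care is the convention on sinks. A sink has out-degree \(0\) and so is formally always unstable, which would otherwise put it in \(\sigma_0\). Here "stable configuration" must be read with sinks excluded, as the paper stipulates that sinks do not belong to firing sets. Once that is noted, the argument is immediate.
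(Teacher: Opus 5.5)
Your proposal is correct and follows the same reasoning as the paper, which justifies the lemma in one line: no firings happen from a stable configuration, so the generated complex is empty. Your induction showing the dynamics is constant, and your remark that sinks are excluded from firing sets, simply make that one-line argument explicit.
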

\begin{lemma}\label{lem:1grain}
    If $|\bc_0|\le1$, then 
    $$\av(G,\bc_0)=
    \begin{cases}
    \emptyset,&\mbox{ if }|\bc_0|=0,\\\emptyset,&\mbox{ if }|\bc_0|=1\text{ and }\bc_0\text{ is stable},\\
    \bigsqcup_k \ast &\mbox{ if }|\bc_0|=1\text{ and }\bc_0\text{ is unstable}.
    \end{cases}$$
    \begin{proof}
        The first two cases follow immediately from Lemma~\ref{lem:stable}. If $|\bc_0|=1$ and is unstable, then each firing set must consist of a single point, thus the avalanche complex is some number of disconnected points. 
    \end{proof}
\end{lemma}

\subsection{Avalanche homology via nerve of a cover}\label{subsec:nerve_of_cover}
The avalanche complex is generated by the firing sets \(\sigma_t\). Hence \(\mathcal{C}=\{\sigma_t\}_{t\in[t_n]}\) immediately yields a cover of \(\av(G,\bc_0)\), where $t_n$ is the time until stabilisation or recurrent orbit and $[t_n]=\{0,1,\ldots,t_n\}$. The nerve of \(\mathcal{C}\), \(N(\mathcal{C})\), is the simplicial complex with vertex set $\mathcal{C}$ and with a simplex \(\{\alpha_1,\alpha_2,\dots,\alpha_q\}\) whenever \(\bigcap \sigma_{\alpha_i} \ne \emptyset\). Since all the elements in this cover are simplices, and hence all intersections \(\sigma_{\alpha_1} \cap \sigma_{\alpha_2} \cap \cdots \cap \sigma_{\alpha_q}\) are acyclic, it follows that we can view avalanche homology as the homology of \(N(\mathcal{C})\) (see for example \cite[Theorem 7.26]{Rotman}):
\[H_k(\av(G,\bc_0)) \approx H_k(N(\mathcal{C})), \quad \text{for all } k \geq 0.\]
A priori each generating firing set of \(\av(G,\bc_0)\) can contain a large number of vertices. This results in a chain complex where we would need to compute homology at very high degrees. Even though our definition of \(\av(G,\bc_0)\) and avalanche homology provides the conceptually direct connection to avalanche dynamics, the  above cover gives a much more condensed simplicial basis for homology computations. We confirmed this in our implementation where the homology of the nerve gave orders of magnitude faster computation, with significantly reduced memory requirements, which can be seen in Table~\ref{tab:nerve}.
\begin{table}
    \centering
    \resizebox{\textwidth}{!}{\begin{tabular}{c||c|c|c|c|c|c|c|c|c|c|c|c|c|c}
    $k$&5&10&15&20&25&30&35&40&45&50&55&60&65&70\\\hline
    Nerve (ms) & 1.20 & 1.21 & 1.25 & 1.19 & 1.28 & 1.32 & 1.28 & 1.32 & 1.36 & 1.50 & 1.46 & 1.44 & 1.61 & 1.77 \\
    Avalanche (ms) & 1.13 & 3.25 & 38220 & 44848 & - & -& -& -& -& -& -& -& -& -
    \end{tabular}}
    \caption{A comparison of computing the homology of the avalanche complex vs. the homology of the nerve complex. We computed $25$ Erd\"os-R\`enyi digraphs ($n=300$, $p=0.05$), with $k$ random vertices selected to start with $25$ grains on each, and $0$ grains on all other vertices. The table shows the average time of the homology computations. Computing the avalanche homology directly takes significantly longer and for $k\ge 25$ the computation required more than 8GB of memory, thus was terminated.}
    \label{tab:nerve}
\end{table}

\section{Avalanche homology of paths and cycles}\label{sec:results}
In order to understand what avalanche homology is capturing of the dynamics, in this section we consider the avalanche homology of some specific classes of simple digraphs. We prove some results on paths (with a sink) and on cycles (without a sink). We see that even on these basic digraphs we have complicated avalanche homology appearing, depending on the initial configuration.
 
\subsection{Path digraphs}
Let $P_n$ be the directed path graph on $n+1$ vertices \(v_1,\dots,v_{n+1}\), and edges \((v_i,v_{i+1})\) for \(i = 1,\dots,n\). The final vertex \(v_{n+1}\) is a sink, thus we do not include it in the avalanche complex. All configurations on \(P_n\)'s will be written as vectors with~\(n\) elements. See Example \ref{ex:P_6_is_S1_vee_S1_vee_s1}.

\begin{example}\label{ex:P_6_is_S1_vee_S1_vee_s1}
    Consider the path \(P_5\) with \(\bc_0=(2,0,1,0,0)\) as shown below:
    \begin{center}
    \begin{tikzpicture}
    \tikzstyle{edge}=[shorten >= 6pt,shorten <= 6pt,->, thick]
    \def \n {6}

    \foreach \s in {1,...,\n}
    {
        \ifthenelse{\s=1}{\node[draw,circle,inner sep=1.5pt,fill=myorange!50] at (1.5*\s,0) {2};}
        
        \ifthenelse{\s=3}{\node[draw,circle,inner sep=1.5pt,fill=myorange!50] at (1.5*\s,0) {1};}

        \ifthenelse{\s=2 \OR \s=4 \OR \s=5}{\node[draw,circle,inner sep=1.5pt] at (1.5*\s,0) {0};}   

        \ifthenelse{\s=6}{\node[draw,circle,inner sep=1.5pt] at (1.5*\s,0) {$\bullet$};}
        
        \ifnum \s < 6
            \draw[edge] ({1.5*\s},0) to ({1.5*(\s+1)},0);
            \node[above] at (1.5*\s,0.2) {$v_\s$};
        \fi
        
    }
    \end{tikzpicture}
    \end{center}

    The maximal simplices of \(\av(P_5,\bc_0)\) are \((v_1,v_3)\), \((v_1,v_2,v_4)\), \((v_2,v_3,v_5)\), \((v_3,v_4)\), and \((v_4,v_5)\). The corresponding simplicial complex is illustrated below, which is homotopy equivalent to \(S^1 \vee S^1 \vee S^1\).

    \begin{center}
    \begin{tikzpicture}
        \tikzstyle{point}=[circle,thick,draw=black,fill=black,inner sep=0pt,minimum width=2pt,minimum height=2pt]
        
        \coordinate (a) at (1,-2);
		\coordinate (b) at (2.3,-1.7);
		\coordinate (c) at (0.85,-0.55);
		\coordinate (d) at (-0.5,0);
		\coordinate (e) at (2,0.4);
		
		\draw[white,line width=5pt,fill=myblue,fill opacity=0.2] (d) -- (a) -- (c) --cycle;
		\draw[white,line width=5pt,fill=myblue,fill opacity=0.2] (e) -- (c) -- (b) --cycle;
        \draw[] (c) -- (b);
        \draw[white,line width=5pt] (a) -- (e);

		\draw[] (b) -- (a);
		\draw[] (a) -- (c);
		\draw[] (d) -- (a);
		\draw[] (d) -- (c);	
		\draw[] (b) -- (e);
		\draw[] (c) -- (e);
        \draw[] (a) -- (e);
        \draw[] (a) -- (e);
		\draw[] (d) -- (e);	
        
		\node[point] at (a) {};
		\node[point] at (d) {};
		\node[point] at (c) {};
		\node[point] at (b) {};
		\node[point] at (e) {};
    \end{tikzpicture}
    \end{center}    
\end{example}

We begin with a simple result that relates $P_n$ to smaller paths when certain vertices do not appear in the dynamics.

\begin{lemma}\label{lem:path_reduction}
    If $\bc_0$ begins with \(\ell\) zeros, then $$\av(P_n,\bc_0)=\av(\widehat{P}_{n-\ell},\widehat{\bc}_0),$$ where $\widehat{\bc}_0$ is obtained from $\bc_0$ by deleting the leading zeros, and \(\widehat{P}_{n-l}\) is obtained from~\(P_n\) by deleting the first \(\ell\) vertices.
    \begin{proof}
        Consider the first zero in \(\bc_0\). The corresponding vertex $v_1$ never fires, thus~$v_1$ is not in $\av(P_n,\bc_0)$. With respect to vertex indices in \(P_n\), the firing sets are the same in the complex \(\av(\widehat{P}_{n-1},\widehat{\bc}_0)\), where $\widehat{\bc}_0$ is obtained from $\bc_0$ by deleting the $0$ in position~$1$, and \(\widehat{P}_{n-1}\) is obtained from \(P_n\) by deleting vertex \(v_1\). The result then follows by induction.
    \end{proof}
\end{lemma}

Next we consider some simple results on when $\av(P_n,\bc_0)$ is contractible. A configuration $\bc$ is \emph{binary} if each of its elements is either 1 or 0.

\begin{lemma}\label{lem:1block_path}
    If $\bc_0$ is a binary configuration where all $1$'s appear in a consecutive block, i.e. $$\bc_0=(0,0,\ldots,0,1,1,\ldots,1,0,\ldots,0),$$ then $\av(P_n,\bc_0)$ is contractible.
    \begin{proof}
        By Lemma \ref{lem:path_reduction} we can restrict to \(\av(\widehat{P}_{n-(\ell-1)},\widehat{\bc}_0)\), where $\ell$ is the location of the first $1$ in $\bc_0$. The set of maximal simplices is 
        $$\{(v_i,v_{i+1},\ldots,v_{i+(k-1)})\, | \, i=\ell,\ldots,n-k \},$$ 
        where $k=|\bc_0|$. This is a path of $(k-1)$-simplices with each consecutive pair of simplices sharing a codimension 1 face, thus is contractible.
    \end{proof}
\end{lemma}

\begin{lemma}\label{lem:startzeros}
    If the first $\lceil\frac{n+1}{2}\rceil$ positions of $\bc_0$ are all non-zero, then $\av(P_n,\bc_0)$ is contractible.
    \begin{proof}
        Vertex $v_{\lceil\frac{n+1}{2}\rceil}$ is in every maximal simplex, thus is a cone point of $\av(P_n,\bc_0)$.
    \end{proof}
\end{lemma}

\begin{lemma}\label{lem:endzeros}
    If the last $\lfloor\frac{n+1}{2}\rfloor$ positions of a binary configuration $\bc_0$ are all non-zero, then $\av(P_n,\bc_0)$ is contractible.
    \begin{proof}
        Every maximal simplex contains $v_n$. To see this suppose for a contradiction $v_n\not\in \sigma_t$ for some maximal simplex $\sigma_t$. Then $\sigma_t$ also cannot contain any vertices $v_i$ for $i<\lfloor\frac{n+1}{2}\rfloor$, since the only way for $v_n$ to not fire is that the sandpile has fired at least $t = \lfloor\frac{n+1}{2}\rfloor$ times, so that all grains initially in the final $\lfloor\frac{n+1}{2}\rfloor$ positions have reached the sink. Thus the first $\lfloor\frac{n+1}{2}\rfloor$ positions in $\bc_t$ must also all be zero, so are not in $\sigma_t$. Therefore, the vertices of $\sigma_t$ are a strict subset of the non-zero elements in $\bc_0$, thus~$\sigma_t$ is not maximal, so we get a contradiction. Hence, $v_n$ is a cone point and $\av(P_n,\bc_0)$ is contractible.
    \end{proof}
\end{lemma}

Note that Lemma~\ref{lem:startzeros} does not put any conditions on the number of grains on the latter half of the positions, these can be zero or non-zero and even non-binary, and analogously for Lemma~\ref{lem:endzeros}. Next we consider a case where the dynamics separate into disjoint parts, see Example~\ref{ex:disconnect_cong}.

\begin{lemma}\label{lem:mod_pos_path}
    Let $\bc_0$ be a binary configuration with $|\bc_0|>1$ and $1\le k\le n$. If the non-zero positions of $\bc_0$ are exactly the equivalence class $$[i]_k=\{j\equiv i \,(mod\,\,k)\, | \, 1\le j \le n\},\ \text{ for some }1\le i\le n,$$ then $\av(P_n,\bc_0)$ is homotopy equivalent to $k$ disconnected points.    
    \begin{proof}
        The firing sets are $\sigma_t=\{v_j\, | \,j \equiv (i+t) \,(mod\,\,k)\text{ and }j\ge i+t\}$, the maximal firing sets being $\sigma_0,\sigma_1,\ldots,\sigma_{k-1}$. If $i \not\equiv j \,(mod\,\,k)$ then $v_i$ and $v_j$ will never fire at the same time step, thus these maximal simplices are all disjoint from each other. Therefore, $\av(P_n,\bc_0)$ consists of a sequence of \(k\) maximal and disconnected $|\bc_0|$-simplices, i.e. $k$ disconnected contractible components.
    \end{proof}
\end{lemma}

\begin{example}\label{ex:disconnect_cong}
    Consider the path \(P_7\) and \(\bc_0=(1,0,0,1,0,0,1)\) shown below:
    \begin{center}
    \begin{tikzpicture}
    \tikzstyle{edge}=[shorten >= 6pt,shorten <= 6pt,->, thick]
    \def \n {7}

    \foreach \s in {1,...,\n}
    {
        \ifthenelse{\s=1 \OR \s=4 \OR \s=7}{\node[draw,circle,inner sep=1.5pt,fill=myorange!50] at (1.5*\s,0) {1};}
        {\node[draw,circle,inner sep=1.5pt] at (1.5*\s,0) {0};}
        \node[above] at (1.5*\s,0.2) {$v_\s$};
        \draw[edge] ({1.5*\s},0) to ({1.5*(\s+1)},0);       
    }
    \node[draw,circle,inner sep=1.5pt] at (1.5*8,0) {$\bullet$};
    \end{tikzpicture}
    \end{center}
    The non-zero positions are exactly the equivalence class $[1]_3=\{1,4,7\}$, thus $\av(P_7,\bc_0)$ is homotopy equivalent to $3$ disconnected points, by Lemma~\ref{lem:mod_pos_path}. The firing sets of \(\av(P_7,\bc_0)\) are \((v_1,v_4,v_7)\), \((v_2,v_5)\), \((v_3,v_6)\), \((v_4,v_7)\), \((v_5)\), \((v_6)\), and \((v_7)\), giving the avalanche complex:

    \begin{center}
    \begin{tikzpicture}
        \tikzstyle{point}=[circle,thick,draw=black,fill=black,inner sep=0pt,minimum width=2pt,minimum height=2pt]
        
        \coordinate (v1) at (0,0);
		\coordinate (v4) at (1.5,0);
		\coordinate (v7) at (0.75,1.3);
		\coordinate (v2) at (2,0);
		\coordinate (v5) at (2,1.3);
        \coordinate (v3) at (3,0);
        \coordinate (v6) at (3,1.3);
		
		\draw[white,line width=5pt,fill=myblue,fill opacity=0.2] (v1) -- (v4) -- (v7) --cycle;
		\draw[] (v2) -- (v5);
		\draw[] (v3) -- (v6);
        \draw[] (v1) -- (v4);
        \draw[] (v4) -- (v7);
        \draw[] (v1) -- (v7);
        
        \node[below] at (v1){$v_1$};
        \node[below] at (v2){$v_2$};
        \node[below] at (v3){$v_3$};
        \node[below] at (v4){$v_4$};
        \node[above] at (v5){$v_5$};
        \node[above] at (v6){$v_6$};
        \node[above] at (v7){$v_7$};
        
		\node[point] at (v1) {};
		\node[point] at (v2) {};
		\node[point] at (v3) {};
		\node[point] at (v4) {};
		\node[point] at (v5) {};
        \node[point] at (v6) {};
        \node[point] at (v7) {};
    \end{tikzpicture}
    \end{center}    
\end{example}

Thus far we have not seen any results that yield avalanche homology in a degree greater than $0$. Next we present a result where we can produce arbitrarily high $\beta_1$, given a sufficiently long path.

\begin{proposition}\label{prop:1101_path}
    Let $k\ge1$,  \(n \geq k+3\) and $\bc_0=(1,1,\underbrace{0,\ldots,0}_{\times k},1,0,0,\ldots,0),$
    then $$\av(P_n,\bc_0) \simeq \bigvee_{n-k-2} S^1.$$
    \begin{proof}
        The firing sets generating $\av(P_n,\bc_0)$ are:
        $$\sigma_t=\begin{cases}
            (v_{t+1},v_{t+2},v_{t+k+3}),&\mbox{ if } t < n-k-2,\\
            (v_{t+1},v_{t+2}),&\mbox{ if } n-k-2\le t < n-1,\\
            (v_{n}),&\mbox{ if } t = n-1.
        \end{cases}$$
        All edges in the boundary of the 2-simplex $\sigma_t$ are free edges, for all $t<n-k-2$. Thus we can elementary collapse $\sigma_t$ with its boundary edge $(v_{t+1},v_{t+k+3})$. Applying this collapse to every $2$-simplex $\sigma_t$, for $t<n-k-2$, leaves a 1-dimensional simplicial complex $X$, which is homotopy equivalent to $\av(P_n,\bc_0)$.
        
        As $X$ is 1-dimensional we know it is a wedge of $1$-spheres, the number of which is given by $\beta_1(X)=edges-vertices+components$. 
        We know $X$ is connected, since it contains the edges $(v_i,v_{i+1})$, for all $i<n$, thus we have $1$ component and $n$ vertices.
        
        To compute the number of edges in $X$, note that each $2$-simplex of $\av(P_n,\bc_0)$ contributes $2$ edges to $X$ (since one edge was removed in the collapse). The $2$-simplices are exactly $\sigma_t$ for $0\le t< n-k-2$, hence yielding $2(n-k-2)$ edges. The edges~$\sigma_t$, for~$n-k-2\le t < n-1$, are also in $X$ yielding another $k+1$ edges. Thus the number of edges in $X$ is $2(n-k-2)+k+1=2n-k-3$. So $$\beta_1(X)=2n-k-3-n+1=n-k-2.$$
    \end{proof}
\end{proposition}

Our next result allows us to create arbitrarily high degree homology, again given a sufficiently long path. For the proof of the next result we employ Discrete Morse Theory, for an introduction see~\cite{forman2002user,kozlov2021organized}.

\begin{proposition}\label{prop:single0path}
        If $\bc_0$ is a binary configuration on $P_n$ with $|\bc_0|=n-1$, i.e.~$\bc_0$ contains a single zero, then 
        $$\av(P_{n},\bc_0)\simeq\begin{cases}
            S^{\lfloor\frac{n}{2}\rfloor-1},&\mbox{ if } \bc_0(v_i)=0 \text{ for } i=\lfloor\frac{n}{2}\rfloor+1,\\
            \ast,&\mbox{otherwise.}
        \end{cases}$$
        \begin{proof}
            The cases $i\not=\lfloor\frac{n}{2}\rfloor+1$ follow by Lemmas~\ref{lem:startzeros} and \ref{lem:endzeros}.

            Consider $i=\lfloor\frac{n}{2}\rfloor+1$.
            Let $f:\av(P_n,\bc_0)\rightarrow\mathbb{N}$ be given by
            $$f(\sigma)=\begin{cases}
                |\sigma|+1,&\mbox{ if } v_n\notin \sigma,\\
                |\sigma|,&\mbox{ if } v_n\in \sigma.
            \end{cases}$$
            The definition of a discrete Morse function $f$ \cite[Definition 2.1]{forman2002user} requires that for any simplex $\sigma$ there is at most $1$ coface (resp. face) $\tau$ of $\sigma$ with $f(\tau)\le f(\sigma)$ (resp. $f(\tau)\ge f(\sigma)$).
            To see our function $f$ satisfies this consider the sets:
            \begin{align*}
                &\{\tau\, | \,\tau \subset \sigma \text{ and } f(\tau)\ge f(\sigma)\}=
                \begin{cases}
                    \{\sigma\setminus\{v_n\}\},&\mbox{ if }v_n\in \sigma \text{ and } \sigma\setminus\{v_n\}\in\av(P_n,\bc_0),\\
                    \emptyset,&\mbox{ otherwise,}
                \end{cases}\\
                &\{\tau\, | \,\sigma \subset \tau  \text{ and } f(\tau)\le f(\sigma)\}=
                \begin{cases}
                    \{\sigma\cup\{v_n\}\},&\mbox{ if }v_n\not\in \sigma\text{ and }\sigma\cup\{v_n\}\in\av(P_n,\bc_0),\\
                    \emptyset,&\mbox{ otherwise.}\\
                \end{cases}
            \end{align*}
            Thus we see in both cases that there is at most one coface with a smaller value and at most one face with a larger value, so $f$ is a discrete Morse function. We can think of this function as pairing a simplex with the simplex obtained by adding or removing~$v_n$.
            
            The critical cells of this discrete Morse function are
            $$\alpha=\left\{\left\lfloor\frac{n}{2}\right\rfloor+1,\ldots,n-1\right\}\,\,\,\,\,\,\text{ and }\,\,\,\,\,\,\beta=\{v_n\},$$
            since $\beta\setminus\{v_n\}=\emptyset$, which is not in $\av(P_n,\bc_0)$, and $\alpha\cup\{v_n\}$ is also not in $\av(P_n,\bc_0)$. 
            To see this for $\alpha$, note that $\alpha$ is the simplex given by the firing set at time ${t=n-\lfloor\frac{n}{2}\rfloor+1}$, when the $0$ in $\bc_0$ has shifted along so that $\bc_t(v_n)=0$, thus adding~$v_n$ to $\alpha$ does not give a valid firing set. Hence, we have a critical $0$-cell and ${(\lfloor\frac{n}{2}\rfloor-1)}$-cell, and it follows by~\cite[Theorem 2.5]{forman2002user} that $\av(P_n,\bc_0)$ is homotopy equivalent to~$S^{\lfloor\frac{n}{2}\rfloor-1}$.
    \end{proof}
\end{proposition}

We conjecture that Proposition~\ref{prop:single0path} gives the binary configuration with the highest degree homology.

\begin{conjecture}\label{conj:max_path_H}
    If $\bc_0$ is a binary configuration, then for $\av(P_n,\bc_0)$ we have $\beta_i=0$ for all $i>\lfloor\frac{n}{2}\rfloor-1$.
\end{conjecture}

We have computationally verified Conjecture~\ref{conj:max_path_H} for all binary configurations on all paths $P_n$ for $n<20$. However, the conjecture does not hold if we relax the binary condition on the initial configuration, a counterexample is $\av(P_7,(5,0,1,1,1,1,1))$, which has $\beta_4=1.$

\subsection{Cycle digraphs}\label{subsec:cycles}
Next we consider the directed cycles $C_n$ on $n$ vertices and no sinks, see Example~\ref{ex:C_4_is_S^2}. As $C_n$ has no sink and every vertex has out-degree $1$, if $|\bc_0|>0$ then the sandpile will never stabilise. However, it will always reach some recurrent orbit, where no new firing sets occur, and hence $\av(C_n,\bc_0)$ is finite. We index the vertices $v_1,\ldots,v_n$ of $C_n$ such that the in- and out-neighbours of $v_i$ are $v_{i-1\,(mod\, n)}$ and $v_{i+1\,(mod\, n)}$, respectively, and for notational simplicity we drop the $(mod\,n)$ henceforth.

\begin{example}\label{ex:C_4_is_S^2}
    Consider the cycle \(C_4\) with \(\bc_0\) as shown below:
    \begin{center}   
    \begin{tikzpicture}
    \tikzstyle{edge}=[shorten >= 6pt,shorten <= 6pt,->, thick]
    \def \n {4}
    \def \radius {1cm}

    \foreach \s in {1,...,\n}
    {
        \ifthenelse{\s=1 \OR \s=4 \OR \s=3}{\node[draw,circle,inner sep=1.5pt,fill=myorange!50] at ({360/\n * (\s - 1)}:\radius) {1};}{\node[draw,circle,inner sep=1.5pt] at ({360/\n * (\s - 1)}:\radius) {0};}
        \draw[edge] ({360/\n * (\s)}:\radius) to ({360/\n * (\s-1)}:\radius);
    }

    \node[right] at (1.15,0) {\(v_1\)};
    \node[left] at (-1.15,0) {\(v_3\)};
    \node[left] at (-0.1,1.15) {\(v_4\)};
    \node[right] at (0.1,-1.15) {\(v_2\)};
    \end{tikzpicture}
    \end{center}
    The firing sets \((v_1,v_2,v_3)\), \((v_2,v_3,v_4)\), \((v_3,v_4,v_1)\) and \((v_4,v_1,v_2)\) generate the complex \(\av(C_4,\bc_0)\). Hence it is homotopy equivalent to \(S^2\).
\end{example}

We begin with two results that allow us to significantly reduce the configuration space that we need to consider. In Example~\ref{ex:morethann} we showed that in general having more grains of sand than vertices does not imply the avalanche complex is trivial. However, for cycles we do get contractible complexes in this case.

\begin{proposition}\label{prop:cycles_are_contractible}
    For any cycle \(C_n\), if \(|\bc_0| \geq n\), then \(V \in \Sp(G,\bc_0)\), so \(\av(C_n,\bc_0)\) is contractible.
\end{proposition}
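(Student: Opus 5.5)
On $C_n$ every vertex has out-degree $1$. So a vertex fires at time $t$ exactly when $\bc_t(v)\ge 1$, and the firing set $\sigma_t$ is the support of $\bc_t$. Under parallel firing each vertex $v_i$ with $\bc_t(v_i)\ge1$ sends one grain to $v_{i+1}$ and keeps the rest. Hence
\[\bc_{t+1}(v_{i+1}) = \bc_t(v_{i+1}) - [\bc_t(v_{i+1})\ge1] + [\bc_t(v_i)\ge1].\]
There is no sink, so $|\bc_t|=|\bc_0|\ge n$ for all $t$. The goal is to find a time $t$ at which every vertex holds at least one grain. Then $\sigma_t=V$, and Lemma~\ref{lem:full_avalanche_is_contractible} gives contractibility.

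I would track the number of empty vertices $z_t=|\{v : \bc_t(v)=0\}|$ and show it is non-increasing and eventually $0$. First, a vertex $v_{i+1}$ that is non-empty at time $t$ stays non-empty at time $t+1$ unless $\bc_t(v_{i+1})=1$ and $v_i$ is empty. Second, an empty vertex $v_{i+1}$ becomes non-empty exactly when $v_i$ is non-empty. This suggests a bijection: each empty vertex at time $t+1$ corresponds to an empty vertex at time $t$ via $v_{i+1}\mapsto v_i$. Indeed, $v_{i+1}$ is empty at time $t+1$ iff $v_i$ is empty at $t$ and $\bc_t(v_{i+1})\le1$. So the set of zeros at time $t+1$ is contained in the shift of the set of zeros at time $t$, which gives $z_{t+1}\le z_t$. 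Moreover, a zero at $v_i$ dies unless the vertex ahead, $v_{i+1}$, holds at most one grain. In other words, the zeros move forward one step per time unit, and a zero is absorbed whenever it runs into a vertex holding $\ge2$ grains.

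The main obstacle is proving that the zeros are all eventually absorbed. Since $|\bc_t|\ge n$, the pigeonhole principle shows that if some vertex is empty, then some vertex holds $\ge2$ grains. I need to show such a pile is actually hit by a moving zero. The vertices with $\ge1$ grain each emit one grain per step, so the excess grains (total minus support) must sit on vertices with $\ge2$ grains.

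I would argue by contradiction using the fact that the dynamics is eventually periodic. Suppose $z_t=z>0$ stays constant on the periodic orbit. Then every zero survives every step, so at each time the vertex directly ahead of each zero holds $\le1$ grain. I will then consider the number of grains on non-empty vertices. Within a maximal run of non-empty vertices, the grain configuration just rotates one step forward, except at the boundary: the leading vertex of the run (the one just behind a zero) loses a grain to the zero, and the vertex just after the zero gains a grain. Following a single zero around the orbit, a period is some multiple of $n$ steps. During that time the zero sweeps past every vertex, so it eventually meets the vertex carrying a pile of $\ge2$ grains, which must exist at every time by pigeonhole. I expect the delicate point to be making this "the zero meets the pile" step rigorous. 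The cleanest route is probably a potential function: $\sum_v \max(\bc_t(v)-1,0)$, the excess, is non-increasing and drops exactly when a zero is absorbed. From there I would show that excess grains advance at most as fast as zeros, namely one step per time unit when leaving a pile. Once a zero is directly behind a pile, absorption happens. Hence $z_t$ strictly decreases until $z_t=0$, at which time $\sigma_t=V$.
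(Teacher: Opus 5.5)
Your preliminary facts are right: the update rule holds, and the zero set at time $t+1$ lies in the forward shift of the zero set at time $t$, so $z_t$ is non-increasing. The gap is in the step you yourself flag as delicate, namely that a surviving zero must eventually hit a vertex with $\ge2$ grains. This step carries the whole proof, and it is not established.

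Pigeonhole only produces such a vertex at each time, possibly a different one each time. Saying ``excess grains advance at most as fast as zeros'' cannot close this: if piles could move at unit speed, a zero could chase one around the cycle indefinitely.

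Your description of runs is also wrong. Inside a run of non-empty vertices the counts do not rotate; they stay put, since each vertex sends one grain and receives one. The only net changes are that the vertex just \emph{after} a zero loses a grain and the zero itself gains one.

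The missing fact is this stationarity, and it follows from your own update rule. If $\bc_t(v)\ge1$ then $\bc_{t+1}(v)\le\bc_t(v)$, and if $\bc_t(v)=0$ then $\bc_{t+1}(v)\le1$. So the excess $\max(\bc_t(v)-1,0)$ at each \emph{individual} vertex never increases, and it drops exactly when a zero is absorbed there.

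Combine this with the invariant $E_t-z_t=|\bc_0|-n\ge0$, where $E_t$ is the total excess. Suppose $z_t=z>0$ for all $t\ge T$. Then the excess is frozen vertex by vertex, so some fixed vertex $w$ holds $\ge2$ grains for all $t\ge T$. Meanwhile each zero advances exactly one vertex per step. Within $n$ steps some zero sits directly behind $w$ and is absorbed, a contradiction. No periodicity is needed.

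For comparison, the paper avoids the forward chase altogether, and your containment is already its key step read backwards. If $\sigma_n\ne V$, some $v_i$ is empty at time $n$. Then $v_{i-1}$ was empty at time $n-1$, and so on, so every vertex is empty at some time in $\{1,\dots,n\}$.

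A vertex holding at most one grain keeps at most one grain forever. So at time $n$ every vertex holds at most one grain and $v_i$ holds none, giving $|\bc_n|\le n-1<|\bc_0|$. This contradicts conservation of grains. The argument forces the all-ones configuration by time $n$ with no potential function, no periodicity, and no need to locate piles.
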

\begin{proof}    
    Consider time $t=n$. If $\sigma_n=V$ the result follows from Lemma~\ref{lem:full_avalanche_is_contractible}. If $\sigma_n\not=V$, then at least one vertex $v_i$ is stable. Thus $v_{i-1}$ was stable at time $n-1$, otherwise it would have fired and added a grain of sand to $v_i$. Continuing inductively we see that every vertex must have been stable for some time $t$.

    Once a vertex in $C_n$ has been stable it can only have $0$ or $1$ grains of sand at any subsequent time step, since it will always lose a grain if non-zero and never gain more than $1$ grain from a single firing set, as it only has one in-neighbour. Thus $|\bc_{n+1}|\le n$, since by time $n+1$ every vertex has been stable, thus has at most $1$ grain of sand. So either $|\bc_{n+1}|=n$ and $\sigma_{n+1}=V$, resulting in contractible $\av(C_n,\bc_0)$, or $|\bc_0|=|\bc_{n+1}|<n$ yielding a contradiction.
\end{proof}

Our next result allows us to limit our consideration of the configuration space to binary configurations, as all other configurations are equivalent to some binary configuration, with respect to avalanche homology.

\begin{theorem}\label{thm:spread}
 If $\bc_0$ is a non-binary configuration, then there is a binary configuration $\widehat{\bc}_0$, such that $\av(C_{n},\bc_0)=\av(C_{n},\widehat{\bc}_0)$.
 \begin{proof}
    If $|\bc_0|\ge n$, then $\av(C_{n},\bc_0)$ is the $n$-simplex by the proof of Proposition~\ref{prop:cycles_are_contractible}, 
    thus equal to $\av(C_{n},\widehat{\bc}_0)$, where $\widehat{\bc}_0$ is the vector of all $1$'s. 

    Consider $|\bc_0|<n$. First note that by the argument in the proof of Proposition~\ref{prop:cycles_are_contractible}, there is some $k>0$ such that $\bc_k$ is binary. Next note that $\sigma_t\subseteq\sigma_{t+n}$, for all $t$, since if $v_i$ fires at time $t$ it sends a grain to $v_{i+1}$ which fires at time $t+1$, and continuing inductively, at time $t+n$ vertex $v_{i+n(mod\,n)}=v_i$ fires. Thus every vertex which fires at time $t$ also fires at time $t+n$. This implies $\av(C_n,\bc_0)=\av(C_n,\bc_t)$ for all $t$, thus the result follows by setting $\widehat{\bc}_0=\bc_k$. 
 \end{proof}
\end{theorem}

We call a simplicial complex \emph{pure} if all maximal simplices are of the same dimension. It is not always true that the avalanche complex is pure, see Figure~\ref{fig:P_5_is_S1_vee_S1} and Example~\ref{ex:P_6_is_S1_vee_S1_vee_s1}. However, for cycles the avalanche complex is always pure.
\begin{proposition}\label{prop:nsphere}
    For every initial configuration $\bc_0$, the complex $\av(C_n,\bc_0)$ is pure of dimension $\min(|\bc_0|,n)-1$.
    \begin{proof}
        If $|\bc_0|\ge n$, then $\av(C_n,\bc_0)$ is an $(n-1)$-simplex, thus pure. 
        If $|\bc_0|<n$, then $\av(C_{n},\bc_0)=\av(C_{n},\widehat{\bc}_0)$, for some binary configuration $\widehat{\bc}_0$, by Theorem~\ref{thm:spread}. Every maximal simplex of $\av(C_{n},\widehat{\bc}_0)$ has dimension $|\bc_0|-1$.
    \end{proof}
\end{proposition}

The following result is the cycle equivalent of Lemma~\ref{lem:mod_pos_path}.
\begin{lemma}\label{lem:modcycle}
    Let $\bc_0$ be a binary configuration with $|\bc_0|>1$ and $1\le k\le n$. If the non-zero positions of $\bc_0$ are exactly the equivalence class $$[i]_k=\{j\equiv i \,(mod\,\,k)\, | \, 1\le j \le n\},\ \text{ for some }1\le i\le n,$$ then $\av(C_n,\bc_0)$ is homotopy equivalent to $k$ disconnected points.  
    \begin{proof}
        If $i \not\equiv j \,(mod\,\,k)$ then $v_i$ and $v_j$ will never fire at the same time step. For a given time step \(t\), the vertices $\{v_j\, | \,j \equiv (i+t) \,(mod\,\,k)\}$ will form a single simplex, thus $\av(P_n,\bc_0)$ consists of $k$ disconnected contractible components.        
    \end{proof}
\end{lemma}
If we instead consider the positions of the zeros, then we get the following conjecture which is analogous to Lemma~\ref{lem:modcycle}. Conjecture~\ref{conj:conjzeroes} has been computationally verified for all such configurations on all $C_n$ for $n<29$.
\begin{conjecture}\label{conj:conjzeroes}
    Let $\bc_0$ be a binary configuration with $|\bc_0|>1$ and $1\le k\le n$. If the zero positions of $\bc_0$ are exactly the equivalence class $$[i]_k=\{j\equiv i \,(mod\,\,k)\, | \, 1\le j \le n\},\ \text{ for some }1\le i\le n,$$ then $\av(C_n,\bc_0)\simeq S^{k-2}$.
\end{conjecture}
We have evaluated the cases of $C_n$ where $|\bc_0|\ge n$, next we consider small values of $|\bc_0|$, in particular when $|\bc_0|=2$.

\begin{proposition}\label{prop:2grains}
    Consider $|\bc_0|=2$ with $$\bc_0=(\dots,0,1,\underbrace{0,\ldots,0}_{\times k-1},1,0,\ldots),$$
    i.e. there are $k-1$ zeros between the ones, then 
    $$\av(C_{n},\bc_0)\simeq\bigsqcup_{\gcd(n,k)}\begin{cases} 
        \ast,&\mbox{ if } k=\frac{n}{2},\\
        S^1,&\mbox{ if }k\not=\frac{n}{2}.
    \end{cases}$$
    \begin{proof}
        The connected components of $\av(C_{n},\bc_0)$ are the sets $\{v_{i},v_{i+k},v_{i+2k},\ldots\}$, for all $1\le i\le n$. These are in bijection with the cosets of the subgroup generated by $k$ of the additive group $\mathbb{Z}_n$. By Lagrange's theorem, the number of cosets is equal to the order of $\mathbb{Z}_n$, which is $n$, divided by the order of the subgroup generated by $k$, which is $\frac{n}{\gcd(n,k)}$ \cite[Proposition 5]{dummit1999abstract}. Thus the number of cosets, and also the number of connected components, is $\frac{n}{\frac{n}{\gcd(n,k)}}=\gcd(n,k)$.
        
        Each component is a sequence of edges $\{v_i,v_{i+k (\text{mod}\,n)}\}$, thus if the component contains more than $2$ vertices, so $k\not=\frac{n}{2}$, each component is a $1$-sphere. If $k=\frac{n}{2}$, then each component is a single edge, thus contractible.
    \end{proof}
\end{proposition}
\begin{corollary}
    If $n$ is prime and $|\bc_0|=2$, then $\av(C_{n},\bc_0)\simeq S^1$.
\end{corollary}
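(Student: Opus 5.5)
This follows directly from Proposition~\ref{prop:2grains}: we only need to identify $k$ and compute $\gcd(n,k)$. For $n=2$ the statement is degenerate: $C_2$ with two grains has $|\bc_0|\ge n$, so the complex is contractible by Proposition~\ref{prop:cycles_are_contractible}. We therefore assume $n$ is an odd prime, which is the intended reading, and note $n=2$ as an exception.

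Write $\bc_0$ with its two grains. If both grains sit on the same vertex, Theorem~\ref{thm:spread} replaces $\bc_0$ by a binary configuration with the same avalanche complex. Along a cycle with $|\bc_0|=2<n$, the first firing sends one grain forward and leaves one behind. The resulting configuration is binary with two ones at distance $k=1$, so we may assume $\bc_0$ is binary with two ones separated by $k-1$ zeros, where $1\le k\le n-1$.

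Since $n$ is prime and $1\le k<n$, we have $\gcd(n,k)=1$. Since $n$ is odd, $k\neq\frac{n}{2}$. Proposition~\ref{prop:2grains} then gives $\av(C_n,\bc_0)\simeq\bigsqcup_1 S^1=S^1$.

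The only step needing care is the reduction to the binary two-ones form. Here the paper's Theorem~\ref{thm:spread} already does the work, so no real obstacle is expected.
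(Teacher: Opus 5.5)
Your argument is correct and follows the route the paper intends: the corollary is an immediate consequence of Proposition~\ref{prop:2grains}, using $\gcd(n,k)=1$ and $k\neq\frac{n}{2}$ for an odd prime $n$. Your reduction of the non-binary case $(2,0,\ldots,0)$ via Theorem~\ref{thm:spread}, and your observation that $n=2$ is a genuine exception (there $|\bc_0|\ge n$, so the complex is a $1$-simplex, matching the case $k=\frac{n}{2}$ of Proposition~\ref{prop:2grains}), are both correct refinements that the paper leaves implicit.
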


If we consider configurations for consecutive $1$'s on the cycle, then we uncover an interesting connection to nerve complexes of circular arcs, studied in \cite{adamaszek2016nerve}. The nerve complex $\mathcal{N}(n,k)$ is defined as the simplicial complex with vertex set $\{0,\ldots, n-1\}$, and its set of maximal simplices is $\{[i, i+k]_n\,|\,i = 0,\ldots, n - 1\}$, where $[i, i+k]_n$ is the image of the set $\{i,i+1,\ldots, i+k\}$ under the modulo $n$ operation. Thus this is exactly the avalanche complex $\av(C_n,\bc_0^{k,n})$, where $\bc_0^{k,n}=(\underbrace{1,\ldots,1}_{\times k},\underbrace{0,\ldots,0}_{\times (n-k)}).$ 

In \cite[Theorem 3.5]{adamaszek2016nerve} a full classification of the homotopy types of $\mathcal{N}(n,k)$ is given, thus we immediately obtain the following result for the avalanche complex. A pictorial representation of the homology resulting from Theorem~\ref{thm:zkn} can be seen in Figure~\ref{fig:Cblockones}.
\begin{theorem}\label{thm:zkn}
    If $\bc_0^{k,n}=(\underbrace{1,\ldots,1}_{\times k},\underbrace{0,\ldots,0}_{\times (n-k)}),$ then $$\av(C_n,\bc_0^{k,n})\simeq
    \begin{cases}
        \bigvee_{n-k}S^{2\ell},&\mbox{ if }k-1=\frac{n\ell}{\ell+1},\\
        S^{2\ell+1},&\mbox{ if }\frac{n\ell}{\ell+1}<k-1<\frac{n(\ell+1)}{\ell+2}
    \end{cases},$$ 
    where  $\ell\in\mathbb{N}$ with $0\le \ell\le \frac{n-1}{2}$.
\end{theorem}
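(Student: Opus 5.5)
The plan is to identify $\av(C_n,\bc_0^{k,n})$ on the nose with a nerve complex of circular arcs, and then read off the homotopy type from \cite[Theorem 3.5]{adamaszek2016nerve}. Nothing beyond this identification and a careful index shift should be needed.

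\emph{Step 1: compute the dynamics.} Let $1\le k<n$; the case $k=n$ is covered by Proposition~\ref{prop:cycles_are_contractible}. Every vertex of $C_n$ has out-degree $1$. So in a binary configuration a vertex is unstable exactly when it carries a grain, and firing moves that grain to its out-neighbour. We show by induction on $t$ that $\bc_t$ is binary with its ones in the cyclic block $\{v_{t+1},\ldots,v_{t+k}\}$ (indices mod $n$). Indeed, if $\bc_t$ is of this form, then $\sigma_t=\{v_{t+1},\ldots,v_{t+k}\}$. Each of these vertices loses its grain. Each of $v_{t+2},\ldots,v_{t+k+1}$ receives exactly one grain from its unique in-neighbour. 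Hence $\bc_{t+1}$ has its ones in the block shifted by one, and no vertex acquires two grains. The dynamics is periodic with period $n$. Therefore the generating firing sets are exactly the $n$ cyclic intervals $\{v_{i+1},\ldots,v_{i+k}\}$, $i=0,\ldots,n-1$, each of size $k$.

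\emph{Step 2: match with $\mathcal{N}(n,\cdot)$.} Relabel $v_{j}\mapsto j-1\in\{0,\ldots,n-1\}$. The firing sets become the sets $[i,i+(k-1)]_n$, and these are precisely the maximal simplices of $\mathcal{N}(n,k-1)$. So $\av(C_n,\bc_0^{k,n})=\mathcal{N}(n,k-1)$ as simplicial complexes. The point to get right here is the off-by-one: an arc $[i,i+m]_n$ has $m+1$ vertices, so a firing set of size $k$ corresponds to the parameter $m=k-1$.

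\emph{Step 3: apply the classification.} Recall \cite[Theorem 3.5]{adamaszek2016nerve}: for $0\le m<n$,
\[
\mathcal{N}(n,m)\simeq\bigvee_{n-m-1}S^{2\ell}\quad\text{if } \tfrac{m}{n}=\tfrac{\ell}{\ell+1},
\qquad
\mathcal{N}(n,m)\simeq S^{2\ell+1}\quad\text{if } \tfrac{\ell}{\ell+1}<\tfrac{m}{n}<\tfrac{\ell+1}{\ell+2}.
\]
Substitute $m=k-1$. The first condition becomes $k-1=\frac{n\ell}{\ell+1}$, and the number of wedge summands is $n-(k-1)-1=n-k$. The second condition becomes $\frac{n\ell}{\ell+1}<k-1<\frac{n(\ell+1)}{\ell+2}$. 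Together these give the two cases of the statement.

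It remains to check that every $k$ with $1\le k<n$ falls in exactly one case for some admissible $\ell$. The values $\frac{\ell}{\ell+1}$ increase from $0$ towards $1$, so each $m/n\in[0,1)$ lies either at one of these points or strictly between two consecutive ones. Since $m\le n-1$, the relevant $\ell$ satisfies $\frac{\ell}{\ell+1}\le\frac{n-1}{n}$, i.e.\ $\ell\le n-1$. The finer bound $\ell\le\frac{n-1}{2}$ in the statement should come from the dimension count: $\mathcal{N}(n,m)$ has dimension $m\le n-2$ in the nontrivial range, which forces $2\ell\le n-1$. I expect this bookkeeping of indices and of the admissible range of $\ell$ to be the only real obstacle, since the topology is entirely imported from \cite{adamaszek2016nerve}.
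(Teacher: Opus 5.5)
Your proposal is correct and follows the paper's route: identify $\av(C_n,\bc_0^{k,n})$ with a nerve complex of evenly spaced circular arcs, then import the homotopy types from \cite[Theorem 3.5]{adamaszek2016nerve}. Your explicit computation of the firing sets and your shift to $\mathcal{N}(n,k-1)$ are more careful than the paper's text, which identifies the avalanche complex with $\mathcal{N}(n,k)$ but states the theorem with $k-1$ and $n-k$, i.e.\ for $\mathcal{N}(n,k-1)$ exactly as you do.
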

As a direct corollary of Theorem~\ref{thm:zkn} we get the case when the binary configuration has a single $0$, which by Proposition~\ref{thm:spread} is equivalent to any configuration with $|\bc_0|=n-1$.
\begin{corollary}\label{cor:1zero}
    Let \(n \geq 3\) and $|\bc_0|=n-1$, then $\av(C_n,\bc_0)\simeq S^{n-2}$.
\end{corollary}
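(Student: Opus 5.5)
By Theorem~\ref{thm:spread}, together with the fact that $\av(C_n,\bc_0)=\av(C_n,\bc_t)$ for all $t$, I will first reduce to the binary configuration $\bc_0^{n-1,n}=(1,\ldots,1,0)$. If $\bc_0$ is non-binary with $|\bc_0|=n-1<n$, Theorem~\ref{thm:spread} gives a binary $\widehat{\bc}_0$ with the same avalanche complex. The number of grains is conserved on $C_n$, so $|\widehat{\bc}_0|=n-1$. Hence $\widehat{\bc}_0$ has exactly one zero. Because the dynamics on $C_n$ just rotates a binary configuration, $\widehat{\bc}_0$ is some $\bc_t$ of the rotation orbit of $\bc_0^{n-1,n}$. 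Relabelling the vertices cyclically is a simplicial isomorphism, so it suffices to treat $\bc_0^{n-1,n}$.

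Next I will apply Theorem~\ref{thm:zkn} with $k=n-1$. Then $k-1=n-2$, and I must find $\ell$ with either $n-2=\frac{n\ell}{\ell+1}$ or $\frac{n\ell}{\ell+1}<n-2<\frac{n(\ell+1)}{\ell+2}$.

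\emph{First case.} The equation $n-2=\frac{n\ell}{\ell+1}$ means $(n-2)(\ell+1)=n\ell$, that is, $n=2\ell+2$ and so $\ell=\frac{n-2}{2}$. This occurs when $n$ is even, and the theorem gives $\bigvee_{n-k}S^{2\ell}=\bigvee_1 S^{n-2}=S^{n-2}$.

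\emph{Second case.} Here $n$ is odd. The inequalities read $n\ell<(n-2)(\ell+1)$, i.e.\ $2\ell+2<n$, and $(n-2)(\ell+2)<n(\ell+1)$, i.e.\ $n<2\ell+4$. So $n=2\ell+3$, which gives $\ell=\frac{n-3}{2}\ge 0$ since $n\ge3$, and the theorem gives $S^{2\ell+1}=S^{n-2}$. I also need to check that $\ell$ lies in the allowed range $0\le\ell\le\frac{n-1}{2}$ in both cases; this is immediate.

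The main care point is the reduction step. I need to confirm that grain count is conserved, which holds because $C_n$ has no sink, and that every binary configuration with a single zero is a rotation of $\bc_0^{n-1,n}$. As an alternative check, I could argue directly: the maximal simplices are exactly the $n$ facets $V\setminus\{v_i\}$ of the full simplex on $V$, since the zero visits every vertex. This complex is the boundary of the $(n-1)$-simplex, which is $S^{n-2}$.
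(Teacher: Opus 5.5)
Your proposal is correct and follows the paper's route. Like the paper, you reduce to the binary configuration with a single zero via Theorem~\ref{thm:spread}, using grain conservation and the fact that binary configurations simply rotate, and then apply Theorem~\ref{thm:zkn} with $k=n-1$; your case computation ($n=2\ell+2$ gives $\bigvee_1 S^{2\ell}$, $n=2\ell+3$ gives $S^{2\ell+1}$) checks out. Your alternative argument is also valid: the maximal simplices are exactly the $n$ facets $V\setminus\{v_i\}$, so the complex is $\partial\Delta^{n-1}\cong S^{n-2}$, and this avoids the circular-arc classification entirely.
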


\begin{figure}
\begin{center}\begin{tikzpicture}[scale=.48, >=stealth]
    \def\xn{30}
    \def\yn{9} 
    \def\m{5}
    % Draw axes
    \draw[->] (0,0) -- (23.5,0);
    \draw[->] (0,0) -- (0,\yn+1.5);
    
    % Draw ticks and labels
    \draw (1,0) -- (1,-0.1) node[left,rotate=90] {$1$};
    \draw (9,0) -- (9,-0.1) node[left,rotate=90] {$\frac{n}{2}+1$};
    \draw (15,0) -- (15,-0.1) node[left,rotate=90] {$\frac{2 n}{3}+1$};
    \draw (18,0) -- (18,-0.1) node[left,rotate=90] {$\frac{3 n}{4}+1$};
    \draw (20,0) -- (20,-0.1) node[left,rotate=90] {$\frac{4 n}{5}+1$};
    \node at (21,-.7){$\cdots$};
    \draw (23,0) -- (23,-0.1) node[left,rotate=90] {$n$};
    \foreach \y in {0,...,8}
     \draw (0,\y+1) -- (-0.1,\y+1) node[left] {$\beta_{\y}$};
    \foreach \y in {1,...,\yn}
      \node at (22,\y) {$\cdots$};
    \foreach \x in {1,...,23}
      \node[rotate=90] at (\x,10.2) {$\cdots$};
    \node at (12,-3) {$k$};
    \node[rotate=90] at (-2,5) {Homology};

    %Row 1
    \foreach \x in {2,...,21}
      \node at (\x,1) {$1$};
    \node at (23,1) {$1$};
    \node at (1,1) {$n$};

    %Row 2 & 3
    \foreach \x in {2,...,8} 
     \node at (\x,2) {$1$};
    \foreach \x in {9,...,21,1,23} 
     \node at (\x,2) {$\tablezero$};
    \node at (9,3) {$\frac{n-2}{2}$};
    \foreach \x in {1,...,8}
     \node at (\x,3) {$\tablezero$};
    \foreach \x in {10,...,21,23} 
     \node at (\x,3) {$\tablezero$};

    %Row 4 & 5
    \foreach \x in {10,...,14} 
     \node at (\x,4) {$1$};
    \foreach \x in {15,...,21,1,23} 
     \node at (\x,4) {$\tablezero$};
    \foreach \x in {1,...,9} 
     \node at (\x,4) {$\tablezero$};
    \node at (15,5) {$\frac{n-3}{3}$};
    \foreach \x in {1,...,14}
     \node at (\x,5) {$\tablezero$};
    \foreach \x in {16,...,21,23} 
     \node at (\x,5) {$\tablezero$}; 

    %Row 6 & 7
    \foreach \x in {16,17} 
     \node at (\x,6) {$1$};
    \foreach \x in {1,...,15,18,19,20,21,23} 
     \node at (\x,6) {$\tablezero$};
    \node at (18,7) {$\frac{n-4}{4}$};
    \foreach \x in {1,...,17,19,20,21,23}
     \node at (\x,7) {$\tablezero$};

    %Row 8 & 9
    \node at (19,8) {$1$};
    \foreach \x in {1,...,18,20,21,23} 
     \node at (\x,8) {$\tablezero$};
    \node at (20,9) {$\frac{n-5}{5}$};
    \foreach \x in {1,...,19,21,23}
     \node at (\x,9) {$\tablezero$};
\end{tikzpicture}\end{center}
\caption{A depiction of the Betti numbers of $\av(C_n,\bc_0^{k,n})$ given by Theorem~\ref{thm:zkn}, with zero homology represented by $\tablezero$. We see that the homology increases in dimension every time we hit $k=\frac{\ell n}{\ell+1}+1$ for $\ell\in\N$.}\label{fig:Cblockones}
\end{figure}

\begin{figure}[t]
    \centering
        \begin{tikzpicture}[scale=0.98]
       \tikzstyle{edge}=[shorten >= 0pt,shorten <= 0pt,->, thick]
        \tikzstyle{point}=[circle,thick,draw=black,fill=black,inner sep=0pt,minimum width=2pt,minimum height=2pt]
        \tikzset{bar/.style={line width=3pt, line cap=round},}
            \def\x{0}
            \def\y{4}
        \node at (0+\x,2.4+\y){\small$n=5$};
        \node at (0+\x,1.9+\y){\small$k=0$};
        \node at (0+\x,\y-6){$S^1$};
        \node[draw=gray,circle,inner sep=1pt,fill=myorange!50] (v1) at (0+\x,1+\y) {\scriptsize $1$};
        \node[draw=gray,circle,inner sep=1pt,fill=myorange!50] (v2) at (0.95+\x,0.3+\y) {\scriptsize $1$};
        \node[draw=gray,circle,inner sep=1pt,fill=myorange!50] (v3) at (0.6+\x,\y-0.8) {\scriptsize $1$};
        \node[draw=gray,circle,inner sep=1pt] (v4) at (\x-0.6,\y-0.809017) {\scriptsize $0$};
        \node[draw=gray,circle,inner sep=1pt] (v5) at (\x-0.95,\y+0.3) {\scriptsize $0$};
        \node[above] at (0+\x,1.1+\y){\scriptsize $v_1$};
        \node[right] at (1.1+\x,0.3+\y){\scriptsize $v_2$};
        \node[right] at (0.7+\x,\y-0.85){\scriptsize $v_3$};
        \node[left] at (\x-0.7,\y-0.85){\scriptsize $v_4$};
        \node[left] at (\x-1.1,\y+0.3){\scriptsize $v_5$};
        \draw[edge] (v1) to (v2);
        \draw[edge] (v2) to (v3);
        \draw[edge] (v3) to (v4); 
        \draw[edge] (v4) to (v5);
        \draw[edge] (v5) to (v1);

        \coordinate (v1) at (-0.5,1);
        \coordinate (v2) at (0,0);
        \coordinate (v3) at (0.5,1);
        \coordinate (v4) at (-1,-1);
        \coordinate (v5) at (1,-1);
        \draw[white,line width=5pt,fill=myblue,fill opacity=0.2] (v1.center) -- (v4) -- (v2) -- (v1.center);
        \draw[white,line width=5pt,fill=myblue,fill opacity=0.2] (v3.center) -- (v5) -- (v2) -- (v3.center);        
        \draw[white,line width=5pt,fill=myblue,fill opacity=0.2] (v4.center) -- (v5) -- (v2) -- (v4.center);
        \draw (v1) to (v4);
        \draw (v1) to (v3);
        \draw (v1) to (v2);
        \draw (v2) to (v5);
        \draw (v2) to (v4);
        \draw (v2) to (v3);
        \draw (v3) to (v5);
        \draw (v4) to (v5);
        \def\pathA{
          (0.5,1)
            .. controls (-1,3) and (-1,2) .. (-1,-1)
            to (-0.5,1)
            -- cycle
        }
         \def\pathB{
          (-0.5,1)
            .. controls (1,3) and (1,2) .. (1,-1)
            to (0.5,1)
            -- (-0.5,1)
        }
        \draw[white,line width=5pt,fill=myblue,fill opacity=0.2] \pathA;
        \draw[black] \pathA;
        \draw[white,line width=5pt,fill=myblue,fill opacity=0.2] \pathB;
        \draw[black] \pathB;
        \node[point] at (v1) {};
		\node[point] at (v2) {};
		\node[point] at (v3) {};
		\node[point] at (v4) {};
		\node[point] at (v5) {};
        \node[right] at (-0.5,0.85){\tiny $v_1$};
		\node[above] at (0,0.1){\tiny $v_3$};
		\node[left] at (0.5,0.85){\tiny $v_5$};
		\node[below] at (-1,-1){\tiny $v_2$};
        \node[below] at (1,-1){\tiny $v_4$};
    \end{tikzpicture}
        \begin{tikzpicture}[scale=0.98]
       \tikzstyle{edge}=[shorten >= 0pt,shorten <= 0pt,->, thick]
        \tikzstyle{point}=[circle,thick,draw=black,fill=black,inner sep=0pt,minimum width=2pt,minimum height=2pt]
        \tikzset{bar/.style={line width=3pt, line cap=round},}
            \def\x{0}
            \def\y{4}
        \node at (0+\x,2.4+\y){\small$n=5$};
        \node at (0+\x,1.9+\y){\small$k=1$};
        \node at (0+\x,\y-6){$S^1$};
        \node[draw=gray,circle,inner sep=1pt,fill=myorange!50] (v1) at (0+\x,1+\y) {\scriptsize $1$};
        \node[draw=gray,circle,inner sep=1pt,fill=myorange!50] (v2) at (0.95+\x,0.3+\y) {\scriptsize $1$};
        \node[draw=gray,circle,inner sep=1pt] (v3) at (0.6+\x,\y-0.8) {\scriptsize $0$};
        \node[draw=gray,circle,inner sep=1pt,fill=myorange!50] (v4) at (\x-0.6,\y-0.809017) {\scriptsize $1$};
        \node[draw=gray,circle,inner sep=1pt] (v5) at (\x-0.95,\y+0.3) {\scriptsize $0$};
        \node[above] at (0+\x,1.1+\y){\scriptsize $v_1$};
        \node[right] at (1.1+\x,0.3+\y){\scriptsize $v_2$};
        \node[right] at (0.7+\x,\y-0.85){\scriptsize $v_3$};
        \node[left] at (\x-0.7,\y-0.85){\scriptsize $v_4$};
        \node[left] at (\x-1.1,\y+0.3){\scriptsize $v_5$};
        \draw[edge] (v1) to (v2);
        \draw[edge] (v2) to (v3);
        \draw[edge] (v3) to (v4); 
        \draw[edge] (v4) to (v5);
        \draw[edge] (v5) to (v1);

        \coordinate (v1) at (-0.5,1);
        \coordinate (v2) at (0,0);
        \coordinate (v3) at (0.5,1);
        \coordinate (v4) at (-1,-1);
        \coordinate (v5) at (1,-1);
        \draw[white,line width=5pt,fill=myblue,fill opacity=0.2] (v1.center) -- (v4) -- (v2) -- (v1.center);
        \draw[white,line width=5pt,fill=myblue,fill opacity=0.2] (v3.center) -- (v5) -- (v2) -- (v3.center);        
        \draw[white,line width=5pt,fill=myblue,fill opacity=0.2] (v4.center) -- (v5) -- (v2) -- (v4.center);
        \draw (v1) to (v4);
        \draw (v1) to (v3);
        \draw (v1) to (v2);
        \draw (v2) to (v5);
        \draw (v2) to (v4);
        \draw (v2) to (v3);
        \draw (v3) to (v5);
        \draw (v4) to (v5);
        \def\pathA{
          (0.5,1)
            .. controls (-1,3) and (-1,2) .. (-1,-1)
            to (-0.5,1)
            -- cycle
        }
         \def\pathB{
          (-0.5,1)
            .. controls (1,3) and (1,2) .. (1,-1)
            to (0.5,1)
            -- (-0.5,1)
        }
        \draw[white,line width=5pt,fill=myblue,fill opacity=0.2] \pathA;
        \draw[black] \pathA;
        \draw[white,line width=5pt,fill=myblue,fill opacity=0.2] \pathB;
        \draw[black] \pathB;
        \node[point] at (v1) {};
		\node[point] at (v2) {};
		\node[point] at (v3) {};
		\node[point] at (v4) {};
		\node[point] at (v5) {};
        \node[right] at (-0.5,0.85){\tiny $v_1$};
		\node[above] at (0,0.1){\tiny $v_2$};
		\node[left] at (0.5,0.85){\tiny $v_3$};
		\node[below] at (-1,-1){\tiny $v_4$};
        \node[below] at (1,-1){\tiny $v_5$};
    \end{tikzpicture}
    \begin{tikzpicture}[scale=0.98]
       \tikzstyle{edge}=[shorten >= 0pt,shorten <= 0pt,->, thick]
        \tikzstyle{point}=[circle,thick,draw=black,fill=black,inner sep=0pt,minimum width=2pt,minimum height=2pt]
        \tikzset{bar/.style={line width=3pt, line cap=round},}
            \def\x{0}
            \def\y{4}
        \node at (0+\x,2.4+\y){\small$n=6$};
        \node at (0+\x,1.9+\y){\small$k=2$};
        \node at (0+\x,\y-6){$\bigvee_4 S^1$};
        \node[draw=gray,circle,inner sep=1pt,fill=myorange!50] (a) at (1+\x,0+\y) {\scriptsize $1$};
        \node[draw=gray,circle,inner sep=1pt,fill=myorange!50] (b) at (0.5+\x,-0.866+\y) {\scriptsize $1$};
        \node[draw=gray,circle,inner sep=1pt] (c) at (-0.5+\x,-0.866+\y) {\scriptsize $0$};
        \node[draw=gray,circle,inner sep=1pt] (d) at (-1+\x,0+\y) {\scriptsize $0$};
        \node[draw=gray,circle,inner sep=1pt,fill=myorange!50] (e) at (-0.5+\x,0.866+\y) {\scriptsize $1$};
        \node[draw=gray,circle,inner sep=1pt] (f) at (0.5+\x,0.866+\y) {\scriptsize $0$};
        \node at (1.4+\x,0+\y){\scriptsize $v_1$};
        \node at (0.9+\x,-1+\y){\scriptsize $v_2$};
        \node at (-0.9+\x,-1+\y){\scriptsize $v_3$};
        \node at (-1.4+\x,-0+\y){\scriptsize $v_4$};
        \node at (-0.9+\x,1+\y){\scriptsize $v_5$};
        \node at (0.9+\x,1+\y){\scriptsize $v_6$};
        \draw[edge] (a) to (b);
        \draw[edge] (b) to (c);
        \draw[edge] (c) to (d); 
        \draw[edge] (d) to (e);
        \draw[edge] (e) to (f);
        \draw[edge] (f) to (a);
        \coordinate (v1) at (-0.5,{ sqrt(3)/6});
        \coordinate (v2) at (0,{-sqrt(3)/3});
        \coordinate (v3) at (0.5,{ sqrt(3)/6});
        \coordinate (v4) at (0,{2*sqrt(3)/3});
        \coordinate (v5) at (-1,{-sqrt(3)/3});
        \coordinate (v6) at (1,{-sqrt(3)/3});
        \draw[white,line width=5pt,fill=myblue,fill opacity=0.2] (v1.center) -- (v4) -- (v3) -- (v1.center);
        \draw[white,line width=5pt,fill=myblue,fill opacity=0.2] (v3.center) -- (v6) -- (v2) -- (v3.center);
        \draw[white,line width=5pt,fill=myblue,fill opacity=0.2] (v1.center) -- (v2) -- (v5) -- (v1.center);
        \draw (v1) to (v4);
        \draw (v1) to (v3);
        \draw (v1) to (v2);
        \draw (v1) to (v5);
        \draw (v3) to (v4);
        \draw (v2) to (v5);
        \draw (v2) to (v3);
        \draw (v2) to (v6);
        \draw (v3) to (v6);
        \def\pathA{
          (-1,{-sqrt(3)/3})
            .. controls (-2,-1) and (-2,-1) .. (0,{2*sqrt(3)/3})
            .. controls (-3,-1) and (-3,-2) .. (0,{-sqrt(3)/3})
            -- cycle
        }
        \draw[white,line width=5pt,fill=myblue,fill opacity=0.2] \pathA;
        \draw[black] \pathA;
        \begin{scope}[rotate around={120:(0,0)}]
        \draw[white,line width=5pt,fill=myblue,fill opacity=0.2] \pathA;
        \draw[black] \pathA;
        \end{scope}
        \begin{scope}[rotate around={240:(0,0)}]
        \draw[white,line width=5pt,fill=myblue,fill opacity=0.2] \pathA;
        \draw[black] \pathA;
        \end{scope}

        \node[point] at (v1) {};
		\node[point] at (v2) {};
		\node[point] at (v3) {};
		\node[point] at (v4) {};
		\node[point] at (v5) {};
		\node[point] at (v6) {};
        \node[right] at (-0.5,{-0.2+sqrt(3)/6}){\tiny $v_1$};
		\node[above] at (0,{0.1-sqrt(3)/3}){\tiny $v_2$};
		\node[left] at (0.5,{-0.2+sqrt(3)/6}){\tiny $v_3$};
		\node[right] at (0,{2*sqrt(3)/3}){\tiny $v_4$};
        \node[left] at (-0.9,{0.15-sqrt(3)/3}){\tiny $v_5$};
        \node[below] at (1,{-sqrt(3)/3}){\tiny $v_6$};
    \end{tikzpicture}
    \caption{Examples of Proposition~\ref{prop:cycle201a}. Left: $k=0$ (Case 1); Middle: $n$ is odd and $k = \frac{n-3}{2}$ (Case 2); Right: $n$ is even and $k=\frac{n}{2}-1$ (Case 3). Case 4 is difficult to depict, as it requires $C_7$ with $7$ overlapping 2-simplices.}
    \label{fig:cycle201}
\end{figure}
Our final result on $C_n$ is the cycle version of Proposition~\ref{prop:1101_path}, where we have $|\bc_0|=3$ with at least two of the $1$'s consecutive. The arguments in the proof can exemplified via Figure \ref{fig:cycle201}.
\begin{proposition}\label{prop:cycle201a}
    If \(n > 4\) and $\bc_0=(1,1,\underbrace{0,\ldots,0}_{\times k},1,0,0,\ldots,0),$ then
    $$\av(C_{n},\bc_0)\simeq\begin{cases}
        S^1,&\mbox{ if }k=0\text{ or }k=n-3,\\
        S^1,&\mbox{ if $n$ is odd and }k=\frac{n-3}{2},\\
        \bigvee_{\frac{n}{2}+1}S^1,&\mbox{ if $n$ is even and }k\in\{\frac{n}{2}-2,\frac{n}{2}-1\},\\
        \bigvee_{n+1}S^1,&\mbox{ otherwise. }\\
    \end{cases}$$
    \begin{proof}
        If $k=0$ or $k=n-3$ the result follows from Theorem~\ref{thm:zkn}.

        In all other cases, we apply an analogous argument to the proof of Proposition~\ref{prop:1101_path}. The maximal simplices of $\av(C_{n},\bc_0)$ are $\{(v_{i},v_{i+1},v_{i+k+2})\, | \, 1\le i\le n\}$. The $1$-simplex $(v_i,v_{i+1})$ only appears in a single 2-simplex, i.e.\ is a free face. Thus we can elementary collapse each 2-simplex $(v_{i},v_{i+1},v_{i+k+2})$ with $(v_i,v_{i+1})$. If we collapse every 2-simplex we are left with a 1-dimensional connected simplicial complex $X$, thus $\beta_1=edges-vertices+components$. The edges of $X$ are exactly $A\cup B$, where 
        $$A=\{(v_i,v_{i+k+2})\, | \,1\le i\le n\}\,\,\,\,\,\text{ and }\,\,\,\,\,B=\{(v_{i+1},v_{i+k+2})\, | \,1\le i\le n\}.$$ 
        We have $n$ vertices, and $1$ component, thus by inclusion-exclusion $$\beta_1=|A|+|B|-|A\cap B|-n+1.$$

        If $n$ is odd and $k=\frac{n-3}{2}$, then the number of trailing zeros in $\bc_0$ is $n-k-3=\frac{n-3}{2}$, thus the distance between vertices $v_{i+1},v_{i+k+2}$ and between vertices $v_{i+k+2},v_i$ is the same, so $A=B$ and $|A|=n$ thus $\beta_1=n+n-n-n+1=1$
        
        Let $n$ be even and $k=\frac{n}{2}-1$. Since $v_{i+1}$ and $v_{i+k+2}=v_{i+\frac{n}{2}}$ are antipodal, the firing sets \((v_{i+1},v_{i+k+2})\) are exactly the $\frac{n}{2}$ antipodal pairs, so $|B|=\frac{n}{2}$. By the same argument with respect to non-antipodality of \((v_i,v_{i+k+2})\), $|A|=n$ and $A\cap B=\emptyset$, thus $\beta_1=n+\frac{n}{2}-n+1=\frac{n}{2}+1$. The argument is analogous for $k=\frac{n}{2}-2$.
        
        Otherwise, $|A|=|B|=n$ and $A\cap B=\emptyset$, so $\beta_1=n+1$.
    \end{proof}
\end{proposition}

For a digraph $G$ on $n$ vertices, the avalanche homology is ``parametrised" by~\(\bc_0\), as each initial configuration is simply a point in $\mathbb{N}^n$. Thus we can ask how the space $\mathbb{N}^n$ decomposes into different domains based on the topology of $\av(G,\bc_0)$. In particular, for the cycles we pose the following question:

\begin{question}\label{que1}
    How does the homotopy types or homology of $\av(C_n,\bc_0)$ decompose \(\N^n\)? 
\end{question}
A similar question can be asked for the paths $P_n$, and for any other class of digraphs. For example, for $C_4$ the results of this section give us:
    \begin{itemize}
        \item if $|\bc_0|>3$, then $\av(C_4,\bc_0)\simeq*$ (Proposition~\ref{prop:cycles_are_contractible});
        \item if $|\bc_0|=3$, then $\av(C_4,\bc_0)\simeq S_2$ (Corollary~\ref{cor:1zero});
        \item if $|\bc_0|=2$ with consecutive non-zero positions, then $\av(C_4,\bc_0)\simeq S_1$ \\(Proposition~\ref{prop:2grains});
        \item if $|\bc_0|=2$ with non-consecutive non-zero positions, then $\av(C_4,\bc_0)\simeq \bigsqcup_2 \ast$\\ (Proposition~\ref{prop:2grains});
        \item if $|\bc_0|=1$, then $\av(C_4,\bc_0)\simeq \bigsqcup_4 \ast$ (Lemma~\ref{lem:1grain});
        \item if $|\bc_0|=0$, then $\av(C_4,\bc_0)=\emptyset$ (Lemma~\ref{lem:1grain}).
    \end{itemize}
Thus we obtain a partition of $\mathbb{N}^4$. The configuration $(1,1,1,1)$ is contractible, and we can visualise the remaining binary configurations by assuming, without loss of generality due to rotational symmetry, that $v_4$ has zero grains of sand; the homotopy types of $\av(C_4,\bc_0)$ are shown in Figure~\ref{fig:configspace}(a) for each \(\bc_0=(v_1,v_2,v_3,0)\). The rotational symmetry of cycles gives certain symmetry to Question \ref{que1} and the following result is immediate.
\begin{lemma}
    Let \(\pi(\bc_0)\) be a cyclic permutation of an initial configuration \(\bc_0\). Then \(\av(C_n,\bc_0) = \av(C_n,\pi(\bc_0)).\)
\end{lemma}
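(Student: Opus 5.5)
The plan is to show that relabelling the vertices by the rotation of $C_n$ induced by $\pi$ carries the whole parallel-firing dynamics of $\bc_0$ onto that of $\pi(\bc_0)$. The only subtlety is how to read ``$=$'': the rotation relabels vertices, so the two avalanche complexes are isomorphic via a simplicial isomorphism induced by a cyclic shift of the vertex indices. Hence they coincide up to this canonical relabelling, and in particular have the same homotopy type and avalanche homology.

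First, write $\pi$ as the cyclic shift $\rho^s$, where $\rho(\bc)(v_{i})=\bc(v_{i-1})$ and indices are taken mod $n$. Since every cyclic permutation is a power of $\rho$, it suffices to treat $\rho$. The map $\phi\colon v_i\mapsto v_{i+1}$ is a digraph automorphism of $C_n$: it sends the edge $(v_i,v_{i+1})$ to $(v_{i+1},v_{i+2})$ and preserves out-degrees, all of which equal $1$. In terms of the Laplacian, the permutation matrix $P$ of $\phi$ satisfies $P\Delta P^{T}=\Delta$ for $\Delta=\Delta(C_n)$.

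Next, argue by induction on $t$ that if $\Sp(C_n,\bc_0)=\{(\bc_t,\sigma_t)\}$, then $\Sp(C_n,\rho(\bc_0))=\{(\rho(\bc_t),\phi(\sigma_t))\}$. The base case $t=0$ holds by definition. For the inductive step, $v_{i+1}$ is unstable in $\rho(\bc_t)$ exactly when $\rho(\bc_t)(v_{i+1})=\bc_t(v_i)\ge 1$, i.e.\ exactly when $v_i$ is unstable in $\bc_t$. Hence the firing set of $\rho(\bc_t)$ is $\phi(\sigma_t)$. Applying \eqref{eq:parallel_firing_dynamics} together with the equivariance of $\Delta$ under $P$ then gives $\rho(\bc_t)-\Delta^T\chi_{\phi(\sigma_t)}=\rho(\bc_{t+1})$.

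Finally, since the avalanche complex is generated by the firing sets, $\av(C_n,\rho(\bc_0))$ is generated by the sets $\phi(\sigma_t)$. It is therefore the image of $\av(C_n,\bc_0)$ under the simplicial isomorphism induced by $\phi$, and iterating gives the statement for $\pi=\rho^s$. No step is a real obstacle; the only care needed is the interpretation of equality as equality up to the vertex relabelling $\phi$, with all the topological invariants considered in this section preserved.
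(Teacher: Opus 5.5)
Your equivariance argument is correct: $\phi$ is an automorphism of $C_n$, the parallel dynamics of $\rho(\bc_0)$ is $\{(\rho(\bc_t),\phi(\sigma_t))\}_t$, and so $\av(C_n,\rho^s(\bc_0))=\phi^s(\av(C_n,\bc_0))$. The paper gives no written proof; it only calls the lemma immediate from the rotational symmetry of cycles.

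The gap is your last step, where you weaken ``$=$'' to ``equal up to the relabelling $\phi^s$''. Both complexes live on the same vertex set $V(C_n)$. The paper uses ``$=$'' literally in this section (e.g.\ $\av(C_n,\bc_0)=\av(C_n,\bc_t)$ in the proof of Theorem~\ref{thm:spread}), and the literal equality is true. To get it you need one more fact that your argument never touches: $\av(C_n,\bc_0)$ is itself invariant under $\phi$. This is a dynamical property of the cycle, not a formal consequence of $\phi$ being an automorphism. For a general digraph with an automorphism $\phi$, the complexes of $\bc_0$ and of its $\phi$-translate usually differ. For example, take two vertices feeding a common sink, with one grain on one of them.

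The missing observation is short. Every vertex of $C_n$ has out-degree $1$, so if $v\in\sigma_t$ then $\phi(v)$ receives a grain from $v$. Also, $\phi(v)$ only gives away a grain when it has one, so $\bc_{t+1}(\phi(v))\ge 1$ and $\phi(v)\in\sigma_{t+1}$. Thus $\phi(\sigma_t)\subseteq\sigma_{t+1}$ for all $t$; this is the same mechanism that yields $\sigma_t\subseteq\sigma_{t+n}$ in the proof of Theorem~\ref{thm:spread}.

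Consequently $\phi$ sends every generator of $\av(C_n,\bc_0)$ into a simplex of $\av(C_n,\bc_0)$, so $\phi(\av(C_n,\bc_0))\subseteq\av(C_n,\bc_0)$. Since $\phi$ acts injectively on subsets of $V$ and the complex is finite, this inclusion is an equality. Combined with your equivariance this gives
\[\av(C_n,\pi(\bc_0))=\phi^s(\av(C_n,\bc_0))=\av(C_n,\bc_0)\]
as complexes on $V(C_n)$, which is the statement as written.
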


\begin{figure}
\begin{center}
\begin{tikzpicture}[scale=0.9]
\begin{axis}[
    view={120}{25},
    axis lines=center,
    xmin=-0.1, xmax=1.2,
    ymin=-0.1, ymax=1.2,
    zmin=-0.1, zmax=1.2,
    ticks=none,
    grid=both,
    clip=false,
    legend cell align=left,
    legend style={
        at={(1.05,1)},
        anchor=north west,
        cells={anchor=west},
    },
]
\node at (axis cs:1.5,0,2.05) {(a)};
% Origin point
\addplot3[
    only marks,
    mark=*,
    mark size=3pt,
    black,
    nodes near coords,
    point meta=explicit symbolic,
    every node near coord/.style={black, font=\scriptsize, anchor=south west}
] coordinates {
    (0,0,0) [(0,0,0)]
};
\addlegendentry{$\emptyset$}

% Yellow points
\addplot3[
    only marks,
    mark=*,
    mark size=3pt,
    myorange
] coordinates {
    (0,0,1) [(0,0,1)]
    (0,1,0) [(0,1,0)]
    (1,0,0) [(1,0,0)]
};
\addlegendentry{$\bigsqcup_4 \ast$}
\node[myorange] at (axis cs:0.3,0,1) {\scriptsize $(0,0,1)$};
\node[myorange] at (axis cs:0.15,1,-0.1) {\scriptsize $(0,1,0)$};
\node[myorange] at (axis cs:1,0.1,-0.1) {\scriptsize $(1,0,0)$};

% Blue points
\addplot3[
    only marks,
    mark=*,
    mark size=3pt,
    myblue,
    nodes near coords,
    point meta=explicit symbolic,
    every node near coord/.style={myblue, font=\scriptsize, anchor=south west}
] coordinates {
    (0,1,1) [(0,1,1)]
    (1,1,0) [(1,1,0)]
};
\addlegendentry{$S^1$}

% Blue points
\addplot3[
    only marks,
    mark=*,
    mark size=3pt,
    myred,
    nodes near coords,
    point meta=explicit symbolic,
    every node near coord/.style={myred, font=\scriptsize, anchor=south west}
] coordinates {
    (1,0,1) [(1,0,1)]
};
\addlegendentry{$\bigsqcup_2 \ast$}

% Green point
\addplot3[
    only marks,
    mark=*,
    mark size=3pt,
    myteal,
    nodes near coords,
    point meta=explicit symbolic,
    every node near coord/.style={myteal, font=\scriptsize, anchor=south west}
] coordinates {
    (1,1,1) [(1,1,1)]
};
\addlegendentry{$S^2$}
\node at (axis cs:1.3,0,0) {$v_1$};
\node at (axis cs:0,1.3,0) {$v_2$};
\node at (axis cs:0,0,1.3) {$v_3$};
\end{axis}
\end{tikzpicture}
\hfill
	\begin{tikzpicture}[scale=0.6]
		\tikzstyle{point}=[circle,thick,draw=black,fill=black,inner sep=0pt,minimum width=2pt,minimum height=2pt]
		
		\draw[ultra thick] (0,0) to (6,0);
		\draw[ultra thick] (0,0) to (0,6);
		\node[left] at (0,6) {\(\N^n\)};
		\node at (3,6.5) {\small configuration space of \(C_n\)};
		 
		\node[point] at (2.5,2.5) {};
		\node[below] at (2.5,2.5) {\small\((1,1,\dots,1)\)};
		\node at (4,4) {\small contractible};
        \node at (4,3.5) {\small($|\bc_0|\ge n$)};
		
		\node[point] at (0,2.5) {};
		\node[above,rotate=90] at (0,2.5) {\small\( (1,0,1,\dots,1)\simeq S^{n-2}\)};
		
		\node[point] at (2.5,0) {};
		\node[below] at (2.5,0) {\small\((0,1,\dots,1)\simeq S^{n-2}\)};
		
		\draw[thick] (2.5,2.5) to (2.5,5);
		\draw[thick] (2.5,2.5) to (5,2.5);
		\draw[thick] (2.5,5) to (0,5);
		\draw[thick] (5,2.5) to (5,0);	
		
		\node[align=left] at (2,1.25) {?};
		
		\draw[fill=myblue,fill opacity=0.2] (0,6) -- (6,6) -- (6,0) -- (5,0) -- (5,2.5) -- (2.5,2.5) -- (2.5,5) -- (0,5) -- cycle;
        \node at (-0.5,7.5) {(b)};
	\end{tikzpicture}
	\end{center}
    \caption{(a): The homotopy types of $\av(C_4,\bc_0)$ plotted on an $\N^3$ projection of the space of initial configurations. 
    (b): A projection on to $\N^2$ of the space of initial configurations for $C_n$.}
    \label{fig:configspace}
\end{figure}
As the dimensionality increases, so does exponentially the number of different initial configurations. It would be interesting to know whether the homotopy types or homologies of $\av(G,\bc_0)$ for some digraph \(G\) are different between any neighbouring configurations, or whether there might exist connected domains yielding the same topology. We illustrate this question in Figure~\ref{fig:configspace}(b) for the cycle \(C_n\); the figure shows the conceptual idea by collapsing \(\N^n\) on the plane. By Proposition \ref{prop:cycles_are_contractible} we know that for \(\bc_0 = (1,1,\dots,1)\) and for any configuration with \(|\bc_0| \geq n\) the complex $\av(G,\bc_0)$ is contractible, as depicted by the blue region. By Corollary \ref{cor:1zero} all projections of \(\bc_0\) to coordinate axes yield \(S^{n-2}\). We have covered some cases in this section for the white region, but a full answer to Question \ref{que1} is open.  

A natural question to ask with graph homology theories is how they behave under certain graph operations. Whilst we leave this as an open question in general, our next result demonstrates that the question maybe be tractable to some degree; see also Figure \ref{fig:wedges}.

\begin{figure}[h]
 \begin{center}
    \begin{tikzpicture}
    \tikzstyle{edge}=[shorten >= 6pt,shorten <= 6pt,->, thick]
    \def \n {6}
    \def \radius {1.5cm}

    \node[draw,circle,inner sep=1.5pt,fill=myorange!50] at ({0}:\radius) {$2$};
    \draw[edge] ({360/\n * (1 - 1)}:\radius) to ({360/\n * (1)}:\radius);
    
    \foreach \s in {3,6}
    {
        \node[draw,circle,inner sep=1.5pt] at ({360/\n * (\s - 1)}:\radius) {$0$};
        \draw[edge] ({360/\n * (\s - 1)}:\radius) to ({360/\n * (\s)}:\radius);
    }    
    \node[draw,circle,inner sep=1.5pt,fill=myorange!50] at ({360/\n * (2 - 1)}:\radius) {$1$};
    \draw[edge] ({360/\n * (2 - 1)}:\radius) to ({360/\n * (2)}:\radius);
    \node[draw,circle,inner sep=1.5pt,fill=myorange!50] at ({360/\n * (4 - 1)}:\radius) {$1$};
    \draw[edge] ({360/\n * (4 - 1)}:\radius) to ({360/\n * (4)}:\radius);
    \node[draw,circle,inner sep=1.5pt] at ({360/\n * (5 - 1)}:\radius) {$0$};
    \draw[edge,dotted] ({360/\n * (5 - 1)}:\radius) to ({360/\n * (5)}:\radius);
    
    \draw[edge] ($({360/\n * (4)}:\radius)+(3,0)$) to ($({360/\n * (4-1)}:\radius)+(3,0)$);
    \node[draw,circle,inner sep=1.5pt,fill=myorange!50] at ($({360/\n * (3 - 1)}:\radius)+(3,0)$){$1$};   
    \draw[edge] ($({360/\n * (3)}:\radius)+(3,0)$) to ($({360/\n * (3-1)}:\radius)+(3,0)$);
    \node[draw,circle,inner sep=1.5pt,fill=myorange!50] at ($({360/\n * (1 - 1)}:\radius)+(3,0)$){$1$};   
    \draw[edge] ($({360/\n * (1)}:\radius)+(3,0)$) to ($({360/\n * (1-1)}:\radius)+(3,0)$);
    \node[draw,circle,inner sep=1.5pt] at ($({360/\n * (-1 - 1)}:\radius)+(3,0)$){$0$};   
    \draw[edge,dotted] ($({360/\n * (-1)}:\radius)+(3,0)$) to ($({360/\n * (-1-1)}:\radius)+(3,0)$);
    \foreach \s in {2,0}
    {
            \node[draw,circle,inner sep=1.5pt] at ($({360/\n * (\s - 1)}:\radius)+(3,0)$){$0$};   
            \draw[edge] ($({360/\n * (\s)}:\radius)+(3,0)$) to ($({360/\n * (\s-1)}:\radius)+(3,0)$);
    }
    \end{tikzpicture}
\end{center}
\caption{An application of Proposition~\ref{prop:cyclewedge}, where $C_n^\vee$ is two copies of $C_n$ glued at a single vertex, and the configuration is $\bc_0=(1,1,0,1,0,0,\ldots)$ on both. Proposition~\ref{prop:cyclewedge} says $\av(C_n^\vee,\bc_0^\vee)\simeq\av(C_n,\bc_0)$, whose homotopy types we know by Proposition~\ref{prop:cycle201a}.}\label{fig:wedges}
\end{figure}
\begin{proposition}\label{prop:cyclewedge}
    Consider $C_n$ and initial configuration $\bc_0=(c_1,c_2,\ldots,c_n)$. Let $C_n^{\vee}$ be the wedge sum of $C_n$ with itself, i.e.\ two copies of $C_n$ glued together at $v_1$ of each graph. Let $$\bc_0^\vee=(2c_1,c_2,\ldots,c_n,c_2,c_3,\ldots,c_n),$$
    i.e.\ the same number of grains on each cycle as before and double on the glued vertex $v_1$. Then $$\av(C_n^\vee,\bc_0^\vee) \simeq \av(C_n,\bc_0).$$
\begin{proof}
Let $v_1,v_2,\ldots,v_n$ be the vertices of one cycle and $v_1,w_2,\ldots,w_n$ the vertices of the other. Every maximal simplex $M_i$ can be split into $M_i=V_i\cup W_i$, i.e.\ the vertices of one cycle and the vertices of the other. As the configurations on the cycles are symmetric, the only maximal simplex that contains $W_i$ is $M_i$, thus we can collapse $M_i$ with $W_i$. Doing so for all maximal simplices makes the $V_i$ the maximal simplices, which are exactly the maximal simplices of $\av(C_n,\bc_0)$, thus the two complexes are homotopy equivalent.
\end{proof}
\end{proposition}
If we start with two cycles with the same configuration but located asymmetrically around the cycles, our preliminary observation is that the firing sets initially behave rather irregularly, but will eventually synchronise with symmetric firing sets rotating along the cycles. We conjecture that wedges of more than two cycles, and not necessarily with equal number of vertices, with asymmetric initial configurations might produce a rich variety of homotopy types.

\subsection{Comparison to other (di)graph homologies}\label{sec:comparisons}
Recently, the \emph{burning homology} of finite undirected graphs was introduced \cite{burning_homology}. Similar in spirit to our work, \emph{graph burning} is a discrete time process on a graph \(G\), where each vertex is either burned or unburned. At every time step \(t\) an unburned vertex is chosen as the \emph{fire source} and burned. At time \(t+1\) the unburned neighbours of burned vertices are burned. Once a vertex is burned it stays in this state until the end of the process, and once all vertices are in the burned state the process ends.

In \cite{burning_homology} the burning process is defined in terms of an ordered sequence of vertices \(S_G = (v_1,v_2,\dots,v_n)\) representing a sequence of fire sources. Each such sequence, that gives a valid burning process on \(G\), hence defines a subset of vertices. %\(\{v_1,v_2,\dots,v_n\}\). 
These sets are then taken as the maximal simplices generating a simplicial complex, the \emph{burning configuration space of \(G\)}, and the burning homology is the homology of this complex.

Table~\ref{tab:burning} displays the non-trivial integral burning homologies for paths \(P_n\) and the avalanche homology of undirected paths. We can see that avalanche homology exhibits much higher homological expressivity.

Another simplicial homology arises from the \emph{directed flag complex} of a digraph \(G\) (see for example \cite{eulermag_torsion,Flagser_paper}). An \(n\)-simplex is given by an ordered sequence of vertices \((v_0,v_1,\dots,v_n)\) such that any ordered pair \((v_i,v_j)\), \(i < j\), is a directed edge of \(G\), hence the simplices are directed cliques. Any path \(P_n\) as a directed flag complex is just a sequence of 1-simplices, hence contractible, while any cycle \(C_n\) is homotopy equivalent to \(S^1\). We have shown in Section \ref{sec:results} that in contrast both paths and cycles can have a wide range of avalanche homologies. 

\begin{table}
    \centering
    \begin{subtable}{\textwidth}\centering
    \begin{tabular}{c||c|c|c|c|c|c}
           & $P_1$ & $P_2$ & $P_3$ & $P_4$ & $P_5$ & $P_6$  \\\hline
     $H_0$ & $\Z$  & $\Z^2$& $\Z^2$& $\Z$  & $\Z$  & $\Z$ \\ 
     $H_1$ & $0$   & $0$   & $0$   & $0$   & $\Z$  & $0$
    \end{tabular}
    \caption{The integral burning homologies for undirected paths \cite[Example 5.7]{burning_homology}.}
    \end{subtable}
    
    \begin{subtable}{\textwidth}\centering
    
    \begin{tabular}{c|c||c|c|c|c|c}
    $G$ & $\bc_0$ & $\beta_0$ & $\beta_1$ & $\beta_2$ & $\beta_3$ & $\beta_4$\\\hline
     $P_6$ & $(1, 7, 0, 0, 2, 0)$ & $1$ & $1$ & $1$ & $0$ & $0$\\
     $P_6$ & $(4, 5, 0, 2, 3, 0)$ & $1$ & $0$ & $0$ & $1$ & $0$\\
     $P_6$ & $(7, 1, 2, 2, 2, 1)$ & $1$ & $0$ & $0$ & $0$ & $1$\\
    \end{tabular}
    \caption{The Betti numbers of the avalanche homology $\av(P_6,\bc_0)$ for three different initial configurations $\bc_0$, where $P_6$ here means the undirected path with $6$ vertices (or equivalently the bidirectional path on $6$ vertices), and an additional sink connected as a neighbour of the first vertex~$v_1$. The avalanche complex of all three are wedges of spheres (verified computationally, see Code Availability \ref{sec:codes}).}
    \end{subtable}
    \caption{Comparison of burning homology and avalanche homology on path graphs.}
    \label{tab:burning}
\end{table}
\begin{figure}
    \begin{center}
       \begin{tikzpicture}[scale=0.8]
       \tikzstyle{edge}=[shorten >= 0pt,shorten <= 0pt,->, thick]
        \tikzstyle{point}=[circle,thick,draw=black,fill=black,inner sep=0pt,minimum width=2pt,minimum height=2pt]
        \tikzset{bar/.style={line width=3pt, line cap=round},}
            \def\x{4}
            \def\y{0}
        \node[draw=gray,circle,inner sep=1pt,fill=myorange!50] (a) at (1,0+\y) {\scriptsize $2$};
        \node[draw=gray,circle,inner sep=1pt] (b) at (0.5,-0.866+\y) {\scriptsize $0$};
        \node[draw=gray,circle,inner sep=1pt] (c) at (-0.5,-0.866+\y) {\scriptsize $0$};
        \node[draw=gray,circle,inner sep=1pt] (d) at (-1,0+\y) {\scriptsize $0$};
        \node[draw=gray,circle,inner sep=1pt,fill=myorange!50] (e) at (-0.5,0.866+\y) {\scriptsize $1$};
        \node[draw=gray,circle,inner sep=1pt] (f) at (0.5,0.866+\y) {\scriptsize $0$};
        \node at (1.4,0+\y){\scriptsize $v_1$};
        \node at (0.9,-1+\y){\scriptsize $v_2$};
        \node at (-0.9,-1+\y){\scriptsize $v_3$};
        \node at (-1.4,-0+\y){\scriptsize $v_4$};
        \node at (-0.9,1+\y){\scriptsize $v_5$};
        \node at (0.9,1+\y){\scriptsize $v_6$};
        \draw[edge] (a) to (b);
        \draw[edge] (b) to (c);
        \draw[edge] (c) to (d); 
        \draw[edge] (d) to (e);
        \draw[edge] (e) to (f);
        \draw[edge] (f) to (a);
        \coordinate (v1) at (0+\x,0+\y);
        \coordinate (v5) at (1+\x,0.2+\y);
		\draw[] (v1) -- (v5);
        \node at (0.15+\x,0.16+\y){\tiny $v_1$};
        \node at (.65+\x,0.26+\y){\tiny $v_5$};
		\node[point] at (v1) {};
		\node[point] at (v5) {};
            \def\y{-3}
        \node[draw=gray,circle,inner sep=1pt,fill=myorange!50] (a) at (1,0+\y) {\scriptsize $1$};
        \node[draw=gray,circle,inner sep=1pt,fill=myorange!50] (b) at (0.5,-0.866+\y) {\scriptsize $1$};
        \node[draw=gray,circle,inner sep=1pt] (c) at (-0.5,-0.866+\y) {\scriptsize $0$};
        \node[draw=gray,circle,inner sep=1pt] (d) at (-1,0+\y) {\scriptsize $0$};
        \node[draw=gray,circle,inner sep=1pt] (e) at (-0.5,0.866+\y) {\scriptsize $0$};
        \node[draw=gray,circle,inner sep=1pt,fill=myorange!50] (f) at (0.5,0.866+\y) {\scriptsize $1$};
        \node at (1.4,0+\y){\scriptsize $v_1$};
        \node at (0.9,-1+\y){\scriptsize $v_2$};
        \node at (-0.9,-1+\y){\scriptsize $v_3$};
        \node at (-1.4,-0+\y){\scriptsize $v_4$};
        \node at (-0.9,1+\y){\scriptsize $v_5$};
        \node at (0.9,1+\y){\scriptsize $v_6$};
        \draw[edge] (a) to (b);
        \draw[edge] (b) to (c);
        \draw[edge] (c) to (d); 
        \draw[edge] (d) to (e);
        \draw[edge] (e) to (f);
        \draw[edge] (f) to (a);
        \coordinate (v1) at (0+\x,0+\y);
		\coordinate (v2) at (-1.2+\x,-0.4+\y);
        \coordinate (v5) at (1+\x,0.2+\y);
        \coordinate (v6) at (2.5+\x,-0.6+\y);
		\draw[white,line width=5pt,fill=myblue,fill opacity=0.2] (v1) -- (v2) -- (v6) --cycle;
		\draw[] (v1) -- (v2);
		\draw[] (v1) -- (v6);
		\draw[] (v1) -- (v5);
        \draw[] (v2) -- (v6);
        \node at (0.15+\x,0.16+\y){\tiny $v_1$};
		\node at (-1.4+\x,-0.5+\y){\tiny $v_2$};
        \node at (.65+\x,0.26+\y){\tiny $v_5$};
        \node at (2.7+\x,-0.7+\y){\tiny $v_6$};
		\node[point] at (v1) {};
		\node[point] at (v2) {};
		\node[point] at (v5) {};
		\node[point] at (v6) {};
            \def\y{-6}
        \node[draw=gray,circle,inner sep=1pt,fill=myorange!50] (a) at (1,0+\y) {\scriptsize $1$};
        \node[draw=gray,circle,inner sep=1pt,fill=myorange!50] (b) at (0.5,-0.866+\y) {\scriptsize $1$};
        \node[draw=gray,circle,inner sep=1pt,fill=myorange!50] (c) at (-0.5,-0.866+\y) {\scriptsize $1$};
        \node[draw=gray,circle,inner sep=1pt] (d) at (-1,0+\y) {\scriptsize $0$};
        \node[draw=gray,circle,inner sep=1pt] (e) at (-0.5,0.866+\y) {\scriptsize $0$};
        \node[draw=gray,circle,inner sep=1pt] (f) at (0.5,0.866+\y) {\scriptsize $0$};
        \node at (1.4,0+\y){\scriptsize $v_1$};
        \node at (0.9,-1+\y){\scriptsize $v_2$};
        \node at (-0.9,-1+\y){\scriptsize $v_3$};
        \node at (-1.4,-0+\y){\scriptsize $v_4$};
        \node at (-0.9,1+\y){\scriptsize $v_5$};
        \node at (0.9,1+\y){\scriptsize $v_6$};        
        \draw[edge] (a) to (b);
        \draw[edge] (b) to (c);
        \draw[edge] (c) to (d); 
        \draw[edge] (d) to (e);
        \draw[edge] (e) to (f);
        \draw[edge] (f) to (a);
        \coordinate (v1) at (0+\x,0+\y);
		\coordinate (v2) at (-1.2+\x,-0.4+\y);
		\coordinate (v3) at (-0.2+\x,1+\y);
        \coordinate (v5) at (1+\x,0.2+\y);
        \coordinate (v6) at (2.5+\x,-0.6+\y);
		\draw[white,line width=5pt,fill=myblue,fill opacity=0.2] (v1) -- (v2) -- (v6) --cycle;
		\draw[white, line width=4pt,fill=myblue,fill opacity=0.2] (v1.center) -- (v2) -- (v3) --cycle;
		\draw[] (v1) -- (v2);
        \draw[] (v1) -- (v3);
		\draw[] (v1) -- (v6);
		\draw[] (v1) -- (v5);
		\draw[] (v2) -- (v3);	
        \draw[] (v2) -- (v6);
        \node at (0.15+\x,0.16+\y){\tiny $v_1$};
		\node at (-1.4+\x,-0.5+\y){\tiny $v_2$};
		\node at (0.03+\x,.7+\y){\tiny $v_3$};
        \node at (.65+\x,0.26+\y){\tiny $v_5$};
        \node at (2.7+\x,-0.7+\y){\tiny $v_6$};
		\node[point] at (v1) {};
		\node[point] at (v2) {};
		\node[point] at (v3) {};
		\node[point] at (v5) {};
		\node[point] at (v6) {};
            \def\y{-9}
        \node[draw=gray,circle,inner sep=1pt] (a) at (1,0+\y) {\scriptsize $0$};
        \node[draw=gray,circle,inner sep=1pt,fill=myorange!50] (b) at (0.5,-0.866+\y) {\scriptsize $1$};
        \node[draw=gray,circle,inner sep=1pt,fill=myorange!50] (c) at (-0.5,-0.866+\y) {\scriptsize $1$};
        \node[draw=gray,circle,inner sep=1pt,fill=myorange!50] (d) at (-1,0+\y) {\scriptsize $1$};
        \node[draw=gray,circle,inner sep=1pt] (e) at (-0.5,0.866+\y) {\scriptsize $0$};
        \node[draw=gray,circle,inner sep=1pt] (f) at (0.5,0.866+\y) {\scriptsize $0$};
        \node at (1.4,0+\y){\scriptsize $v_1$};
        \node at (0.9,-1+\y){\scriptsize $v_2$};
        \node at (-0.9,-1+\y){\scriptsize $v_3$};
        \node at (-1.4,-0+\y){\scriptsize $v_4$};
        \node at (-0.9,1+\y){\scriptsize $v_5$};
        \node at (0.9,1+\y){\scriptsize $v_6$};  
        \draw[edge] (a) to (b);
        \draw[edge] (b) to (c);
        \draw[edge] (c) to (d); 
        \draw[edge] (d) to (e);
        \draw[edge] (e) to (f);
        \draw[edge] (f) to (a);
        \coordinate (v1) at (0+\x,0+\y);
		\coordinate (v2) at (-1.2+\x,-0.4+\y);
		\coordinate (v3) at (-0.2+\x,1+\y);
		\coordinate (v4) at (-0.4+\x,2+\y);
        \coordinate (v5) at (1+\x,0.2+\y);
        \coordinate (v6) at (2.5+\x,-0.6+\y);
		\draw[white,line width=5pt,fill=myblue,fill opacity=0.2] (v1) -- (v2) -- (v6) --cycle;
		\draw[white, line width=4pt,fill=myblue,fill opacity=0.2] (v1.center) -- (v2) -- (v3) --cycle;
        \draw[white,line width=5pt,fill=myblue,fill opacity=0.2] (v2) -- (v3) -- (v4) --cycle;
		\draw[] (v1) -- (v2);
        \draw[] (v1) -- (v3);
		\draw[] (v1) -- (v6);
		\draw[] (v1) -- (v5);
		\draw[] (v2) -- (v3);	
        \draw[] (v2) -- (v4);
        \draw[] (v2) -- (v6);
		\draw[] (v3) -- (v4);
        \node at (0.15+\x,0.16+\y){\tiny $v_1$};
		\node at (-1.4+\x,-0.5+\y){\tiny $v_2$};
		\node at (0.03+\x,.7+\y){\tiny $v_3$};
		\node at (-0.4+\x,2.2+\y){\tiny $v_4$};
        \node at (.65+\x,0.26+\y){\tiny $v_5$};
        \node at (2.7+\x,-0.7+\y){\tiny $v_6$};
		\node[point] at (v1) {};
		\node[point] at (v2) {};
		\node[point] at (v3) {};
		\node[point] at (v4) {};
		\node[point] at (v5) {};
		\node[point] at (v6) {};
            \def\y{-12}
        \node[draw=gray,circle,inner sep=1pt] (a) at (1,0+\y) {\scriptsize $0$};
        \node[draw=gray,circle,inner sep=1pt] (b) at (0.5,-0.866+\y) {\scriptsize $0$};
        \node[draw=gray,circle,inner sep=1pt,fill=myorange!50] (c) at (-0.5,-0.866+\y) {\scriptsize $1$};
        \node[draw=gray,circle,inner sep=1pt,fill=myorange!50] (d) at (-1,0+\y) {\scriptsize $1$};
        \node[draw=gray,circle,inner sep=1pt,fill=myorange!50] (e) at (-0.5,0.866+\y) {\scriptsize $1$};
        \node[draw=gray,circle,inner sep=1pt] (f) at (0.5,0.866+\y) {\scriptsize $0$};
        \node at (1.4,0+\y){\scriptsize $v_1$};
        \node at (0.9,-1+\y){\scriptsize $v_2$};
        \node at (-0.9,-1+\y){\scriptsize $v_3$};
        \node at (-1.4,-0+\y){\scriptsize $v_4$};
        \node at (-0.9,1+\y){\scriptsize $v_5$};
        \node at (0.9,1+\y){\scriptsize $v_6$};  
        \draw[edge] (a) to (b);
        \draw[edge] (b) to (c);
        \draw[edge] (c) to (d); 
        \draw[edge] (d) to (e);
        \draw[edge] (e) to (f);
        \draw[edge] (f) to (a);
        \coordinate (v1) at (0+\x,0+\y);
		\coordinate (v2) at (-1.2+\x,-0.4+\y);
		\coordinate (v3) at (-0.2+\x,1+\y);
		\coordinate (v4) at (-0.4+\x,2+\y);
        \coordinate (v5) at (1+\x,0.2+\y);
        \coordinate (v6) at (2.5+\x,-0.6+\y);
		\draw[white,line width=5pt,fill=myblue,fill opacity=0.2] (v1) -- (v2) -- (v6) --cycle;
		\draw[white, line width=4pt,fill=myblue,fill opacity=0.2] (v1.center) -- (v2) -- (v3) --cycle;
        \draw[white,line width=5pt,fill=myblue,fill opacity=0.2] (v2) -- (v3) -- (v4) --cycle;
        \draw[white,line width=5pt,fill=myblue,fill opacity=0.2] (v3) -- (v4) -- (v5) --cycle;
		\draw[] (v1) -- (v2);
        \draw[] (v1) -- (v3);
		\draw[] (v1) -- (v6);
		\draw[] (v1) -- (v5);
		\draw[] (v2) -- (v3);	
        \draw[] (v2) -- (v4);
        \draw[] (v2) -- (v6);
		\draw[] (v3) -- (v4);
        \draw[] (v3) -- (v5);
        \draw[] (v4) -- (v5);
        \node at (0.15+\x,0.16+\y){\tiny $v_1$};
		\node at (-1.4+\x,-0.5+\y){\tiny $v_2$};
		\node at (0.03+\x,.7+\y){\tiny $v_3$};
		\node at (-0.4+\x,2.2+\y){\tiny $v_4$};
        \node at (.65+\x,0.26+\y){\tiny $v_5$};
        \node at (2.7+\x,-0.7+\y){\tiny $v_6$};
		\node[point] at (v1) {};
		\node[point] at (v2) {};
		\node[point] at (v3) {};
		\node[point] at (v4) {};
		\node[point] at (v5) {};
		\node[point] at (v6) {};
            \def\y{-15}
        \node[draw=gray,circle,inner sep=1pt] (a) at (1,0+\y) {\scriptsize $0$};
        \node[draw=gray,circle,inner sep=1pt] (b) at (0.5,-0.866+\y) {\scriptsize $0$};
        \node[draw=gray,circle,inner sep=1pt] (c) at (-0.5,-0.866+\y) {\scriptsize $0$};
        \node[draw=gray,circle,inner sep=1pt,fill=myorange!50] (d) at (-1,0+\y) {\scriptsize $1$};
        \node[draw=gray,circle,inner sep=1pt,fill=myorange!50] (e) at (-0.5,0.866+\y) {\scriptsize $1$};
        \node[draw=gray,circle,inner sep=1pt,fill=myorange!50] (f) at (0.5,0.866+\y) {\scriptsize $1$};
        \node at (1.4,0+\y){\scriptsize $v_1$};
        \node at (0.9,-1+\y){\scriptsize $v_2$};
        \node at (-0.9,-1+\y){\scriptsize $v_3$};
        \node at (-1.4,-0+\y){\scriptsize $v_4$};
        \node at (-0.9,1+\y){\scriptsize $v_5$};
        \node at (0.9,1+\y){\scriptsize $v_6$}; 
        \draw[edge] (a) to (b);
        \draw[edge] (b) to (c);
        \draw[edge] (c) to (d); 
        \draw[edge] (d) to (e);
        \draw[edge] (e) to (f);
        \draw[edge] (f) to (a);
        \coordinate (v1) at (0+\x,0+\y);
		\coordinate (v2) at (-1.2+\x,-0.4+\y);
		\coordinate (v3) at (-0.2+\x,1+\y);
		\coordinate (v4) at (-0.4+\x,2+\y);
        \coordinate (v5) at (1+\x,0.2+\y);
        \coordinate (v6) at (2.5+\x,-0.6+\y);
		\draw[white,line width=5pt,fill=myblue,fill opacity=0.2] (v1) -- (v2) -- (v6) --cycle;
		\draw[white, line width=4pt,fill=myblue,fill opacity=0.2] (v1.center) -- (v2) -- (v3) --cycle;
        \draw[white,line width=5pt,fill=myblue,fill opacity=0.2] (v2) -- (v3) -- (v4) --cycle;
        \draw[white,line width=5pt,fill=myblue,fill opacity=0.2] (v3) -- (v4) -- (v5) --cycle;
        \draw[white,line width=5pt,fill=myblue,fill opacity=0.2] (v4) -- (v5) -- (v6) --cycle;
		\draw[] (v1) -- (v2);
        \draw[] (v1) -- (v3);
		\draw[] (v1) -- (v6);
		\draw[] (v1) -- (v5);
		\draw[] (v2) -- (v3);	
        \draw[] (v2) -- (v4);
        \draw[] (v2) -- (v6);
		\draw[] (v3) -- (v4);
        \draw[] (v3) -- (v5);
        \draw[] (v4) -- (v5);
        \draw[] (v4) -- (v6);
        \draw[] (v5) -- (v6);        
        \node at (0.15+\x,0.16+\y){\tiny $v_1$};
		\node at (-1.4+\x,-0.5+\y){\tiny $v_2$};
		\node at (0.03+\x,.7+\y){\tiny $v_3$};
		\node at (-0.4+\x,2.2+\y){\tiny $v_4$};
        \node at (.65+\x,0.26+\y){\tiny $v_5$};
        \node at (2.7+\x,-0.7+\y){\tiny $v_6$};
		\node[point] at (v1) {};
		\node[point] at (v2) {};
		\node[point] at (v3) {};
		\node[point] at (v4) {};
		\node[point] at (v5) {};
		\node[point] at (v6) {};
            \def\y{-18}
        \node[draw=gray,circle,inner sep=1pt,fill=myorange!50] (a) at (1,0+\y) {\scriptsize $1$};
        \node[draw=gray,circle,inner sep=1pt] (b) at (0.5,-0.866+\y) {\scriptsize $0$};
        \node[draw=gray,circle,inner sep=1pt] (c) at (-0.5,-0.866+\y) {\scriptsize $0$};
        \node[draw=gray,circle,inner sep=1pt] (d) at (-1,0+\y) {\scriptsize $0$};
        \node[draw=gray,circle,inner sep=1pt,fill=myorange!50] (e) at (-0.5,0.866+\y) {\scriptsize $1$};
        \node[draw=gray,circle,inner sep=1pt,fill=myorange!50] (f) at (0.5,0.866+\y) {\scriptsize $1$};
        \node at (1.4,0+\y){\scriptsize $v_1$};
        \node at (0.9,-1+\y){\scriptsize $v_2$};
        \node at (-0.9,-1+\y){\scriptsize $v_3$};
        \node at (-1.4,-0+\y){\scriptsize $v_4$};
        \node at (-0.9,1+\y){\scriptsize $v_5$};
        \node at (0.9,1+\y){\scriptsize $v_6$};
        \draw[edge] (a) to (b);
        \draw[edge] (b) to (c);
        \draw[edge] (c) to (d); 
        \draw[edge] (d) to (e);
        \draw[edge] (e) to (f);
        \draw[edge] (f) to (a);
        \coordinate (v1) at (0+\x,0+\y);
		\coordinate (v2) at (-1.2+\x,-0.4+\y);
		\coordinate (v3) at (-0.2+\x,1+\y);
		\coordinate (v4) at (-0.4+\x,2+\y);
        \coordinate (v5) at (1+\x,0.2+\y);
        \coordinate (v6) at (2.5+\x,-0.6+\y);
		\draw[white,line width=5pt,fill=myblue,fill opacity=0.2] (v1) -- (v2) -- (v6) --cycle;
		\draw[white, line width=4pt,fill=myblue,fill opacity=0.2] (v1.center) -- (v2) -- (v3) --cycle;
        \draw[white,line width=5pt,fill=myblue,fill opacity=0.2] (v2) -- (v3) -- (v4) --cycle;
        \draw[white,line width=5pt,fill=myblue,fill opacity=0.2] (v3) -- (v4) -- (v5) --cycle;
        \draw[white,line width=5pt,fill=myblue,fill opacity=0.2] (v4) -- (v5) -- (v6) --cycle;
        \draw[white,line width=5pt,fill=myblue,fill opacity=0.2] (v1.center) -- (v5) -- (v6) -- (v1.center);
		\draw[] (v1) -- (v2);
        \draw[] (v1) -- (v3);
		\draw[] (v1) -- (v6);
		\draw[] (v1) -- (v5);
		\draw[] (v2) -- (v3);	
        \draw[] (v2) -- (v4);
        \draw[] (v2) -- (v6);
		\draw[] (v3) -- (v4);
        \draw[] (v3) -- (v5);
        \draw[] (v4) -- (v5);
        \draw[] (v4) -- (v6);
        \draw[] (v5) -- (v6);
        \node at (0.15+\x,0.16+\y){\tiny $v_1$};
		\node at (-1.4+\x,-0.5+\y){\tiny $v_2$};
		\node at (0.03+\x,.7+\y){\tiny $v_3$};
		\node at (-0.4+\x,2.2+\y){\tiny $v_4$};
        \node at (.65+\x,0.26+\y){\tiny $v_5$};
        \node at (2.7+\x,-0.7+\y){\tiny $v_6$};
		\node[point] at (v1) {};
		\node[point] at (v2) {};
		\node[point] at (v3) {};
		\node[point] at (v4) {};
		\node[point] at (v5) {};
		\node[point] at (v6) {};
        \draw[bar, myblue!70, ->]   (8, 0) -- (8, -20);
        \draw[bar, myred!70, ->]   (9, -12) -- (9, -20);
        \draw[bar, myred!70]   (10, -15) -- (10, -18);
        \draw[dotted,gray]   (7, 0) -- (10.5, 0);
        \draw[dotted,gray]   (7, -3) -- (10.5, -3);
        \draw[dotted,gray]   (7, -6) -- (10.5, -6);
        \draw[dotted,gray]   (7, -9) -- (10.5, -9);
        \draw[dotted,gray]   (7, -12) -- (10.5, -12);
        \draw[dotted,gray]   (7, -15) -- (10.5, -15);
        \draw[dotted,gray]   (7, -18) -- (10.5, -18);
        \node at (11.5,0){$t=0$};
        \node at (11.5,-3){$t=1$};
        \node at (11.5,-6){$t=2$};
        \node at (11.5,-9){$t=3$};
        \node at (11.5,-12){$t=4$};
        \node at (11.5,-15){$t=5$};
        \node at (11.5,-18){$t=6$};
        \draw[bar, myblue!70]  (8, 0.5) -- (8.6, 0.5) node[right, black, font=\small] {$H_0$};
        \draw[bar, myred!70]   (9.8, 0.5) -- (10.4, 0.5) node[right, black, font=\small] {$H_1$};
    \end{tikzpicture}
    \end{center}
    \caption{The persistent avalanche homology $\mathcal{P}(C_6,(2,0,0,0,1,0))$. Left: sandpile avalanche, middle: filtered complex, right: persistence barcode.}
    \label{fig:per_example}
\end{figure}

\section{Persistent avalanche homology}\label{sec:per}
At each time step \(t\) of the dynamics \(\Sp(G,\bc_0)\) we add at most one simplex to the complex \(\av(G,\bc_0)\), coming from the firing set \(\sigma_t\). This added simplex may or may not have an effect on the homology of the evolving avalanche complex. The dynamics naturally induces a filtration by subcomplexes

\begin{equation}\label{eq:filtration}
    \av_0(G,\bc_0) \hookrightarrow \av_1(G,\bc_0) \hookrightarrow \av_2(G,\bc_0) \hookrightarrow \cdots,
\end{equation}
where for each time step \(i\) the avalanche complex \(\av_{i}(G,\bc_0)\) is generated by the firing sets in \(\{\sigma_t\}_{t \leq i}\).

The avalanche homology in degree \(k\) (over a coefficient field \(K\)) is the colimit of the associated diagram of homology vector spaces
\begin{equation}\label{eq:av_pers_module}
    H_k(\av_0(G,\bc_0)) \ra H_k(\av_1(G,\bc_0)) \ra H_k(\av_2(G,\bc_0)) \ra \cdots.
\end{equation}
However, from the point of view of the dynamics and the homological changes incrementally induced in time steps, we see it as more interesting to view \eqref{eq:av_pers_module} as the persistence module \(\mathcal{P}(G,\bc_0)\) for \emph{persistent avalanche homology} of \(G\), and study the more refined homological information it contains.

Indeed, Figure~\ref{fig:per_example} demonstrates the persistent avalanche homology for a complex homotopy equivalent to \(S^1\); the associated persistence barcode reveals the onset of homological changes. Moreover, two initial configurations yielding the same avalanche homology may have differing persistent avalanche homology, see the persistence diagrams in Figure~\ref{fig:stable}.

\begin{remark}
    Given a digraph $G$ and an initial configuration $\bc_0$, consider the number of maximal simplices $m_i$ at filtration value $i$, i.e.\ of the complex $\av_i(G,\bc_0)$ in \eqref{eq:filtration}. The sequence $M=(m_0,m_1,m_2,\ldots,m_k)$ is a \emph{parking function}, i.e.\ $m_i\le i$ for all $i$, since we add at most $1$ new maximal simplex at each step. A parking function is any sequence which when rearranged into increasing order satisfies this property; they have many interesting links to sandpiles, see \cite{Klivans}.
\end{remark}

Persistence theory encompasses many metrics between persistence modules and persistence diagrams/barcodes. Their application to the persistent avalanche homology gives a useful tool to measure homological distances between dynamics on different digraphs, see Figure~\ref{fig:stable} for an illustration. Moreover, there is an isomorphism \(\mathcal{P}(G,\mathbf{g}_0) \simeq \mathcal{P}(H,\mathbf{h}_0)\) if and only if the bottleneck distance between the associated barcodes is zero \cite[Theorem 3.7]{Oudot_book}. Hence the distance comparisons detect deviations from isomorphism, and to re-iterate, these deviations can be linked to at most one firing set at each filtration step. We see exploring the interplay between persistent homology and avalanche dynamics as a fruitful avenue, which we leave for further study.

A natural question also to ask is whether there exists any stability theorem for persistent avalanche homology. While we do not attempt it in this paper, we illustrate the difficulty of such a result due to the discrete nature of the constructions. The smallest change we can make to an initial configuration is a whole grain of sand, and the smallest change to a digraph is the removal of an edge or a vertex. By the results of Section~\ref{sec:results}, we know that a single grain of sand can have a large effect on the topology. For example, when $|\bc_0|=n-1$ we know $\av(C_n,\bc_0)\simeq S^{n-2}$ (by Proposition~\ref{cor:1zero}), yet adding a single grain of sand to the initial configuration results in a contractible complex by Proposition~\ref{prop:cycles_are_contractible}. Figure~\ref{fig:stable} shows that adding a single grain of sand causes a significant change to the persistence diagram. A tentative stability theorem would take the form \(d_?((G,\mathbf{g}_0),(H,\mathbf{h}_0)) \leq d_I(\mathcal{P}(G,\mathbf{g}_0), \mathcal{P}(H,\mathbf{h}_0))\), where \(d_I\) denotes the interleaving distance between persistence modules, or equivalently the bottleneck distance between the persistence diagrams/barcodes by the isometry theorem \cite{Lesnick_interleaving}. An important, and potentially difficult, consideration is identifying an appropriate metric \(d_?\) between digraphs and initial configurations.

\begin{figure}
    \centering
    \begin{tikzpicture}[scale=.6]
    \tikzstyle{edge}=[shorten >= 0pt,shorten <= 0pt,->]
    \def\r{1}
    
    \node (fig) at (-50:1.2) {\includegraphics[scale=.39]{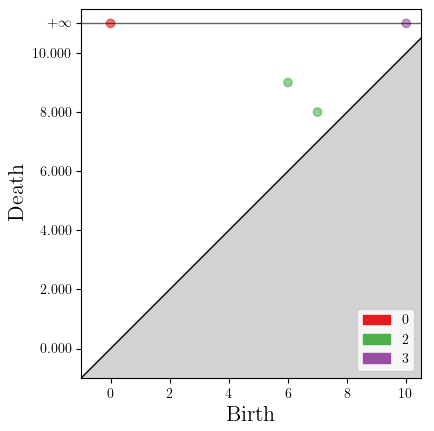}};
    \node[draw=gray,circle,inner sep=1pt,fill=myorange!50] (a) at (0:\r) {\scriptsize $1$};
    \node[draw=gray,circle,inner sep=1pt] (b) at (-51:\r) {\scriptsize $0$};
    \node[draw=gray,circle,inner sep=1pt,fill=myorange!50] (c) at (-102:\r) {\scriptsize $1$};
    \node[draw=gray,circle,inner sep=1pt,fill=myorange!50] (d) at (-153:\r) {\scriptsize $3$};
    \node[draw=gray,circle,inner sep=1pt] (e) at (-204:\r) {\scriptsize $0$};
    \node[draw=gray,circle,inner sep=1pt] (f) at (-255:\r) {\scriptsize $0$};
    \node[draw=gray,circle,inner sep=1pt] (g) at (-306:\r) {\scriptsize $0$};
    \draw[edge] (a) to (b);
    \draw[edge] (b) to (c);
    \draw[edge] (c) to (d); 
    \draw[edge] (d) to (e);
    \draw[edge] (e) to (f);
    \draw[edge] (f) to (g);
    \draw[edge] (g) to (a);
    
    \begin{scope}[shift={(7,0)}]
    \node (fig) at (-50:1.2) {\includegraphics[scale=.39]{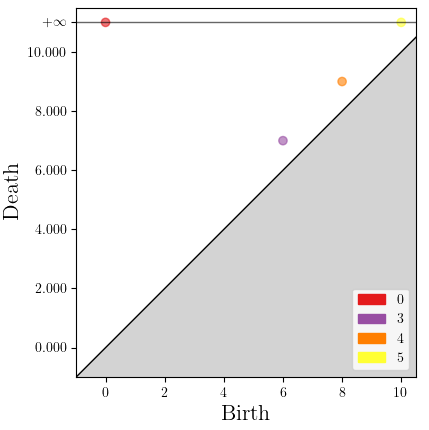}};
    \node[draw=gray,circle,inner sep=1pt,fill=myorange!50] (a) at (0:\r) {\scriptsize $1$};
    \node[draw=gray,circle,inner sep=1pt] (b) at (-51:\r) {\scriptsize $0$};
    \node[draw=gray,circle,inner sep=1pt,fill=myorange!50] (c) at (-102:\r) {\scriptsize $1$};
    \node[draw=gray,circle,inner sep=1pt,fill=myorange!50] (d) at (-153:\r) {\scriptsize $3$};
    \node[draw=gray,circle,inner sep=1pt,fill=myorange!50] (e) at (-204:\r) {\scriptsize $1$};
    \node[draw=gray,circle,inner sep=1pt] (f) at (-255:\r) {\scriptsize $0$};
    \node[draw=gray,circle,inner sep=1pt] (g) at (-306:\r) {\scriptsize $0$};
    \draw[edge] (a) to (b);
    \draw[edge] (b) to (c);
    \draw[edge] (c) to (d); 
    \draw[edge] (d) to (e);
    \draw[edge] (e) to (f);
    \draw[edge] (f) to (g);
    \draw[edge] (g) to (a);
    \end{scope}
    
    \begin{scope}[shift={(14,0)}]
    \node (fig) at (-50:1.2) {\includegraphics[scale=.39]{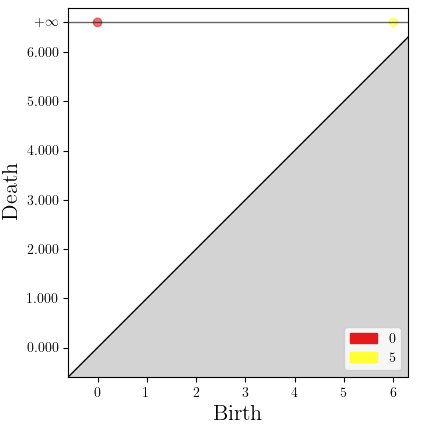}};
    \node[draw=gray,circle,inner sep=1pt,fill=myorange!50] (a) at (0:\r) {\scriptsize $1$};
    \node[draw=gray,circle,inner sep=1pt,fill=myorange!50] (b) at (-51:\r) {\scriptsize $1$};
    \node[draw=gray,circle,inner sep=1pt,fill=myorange!50] (c) at (-102:\r) {\scriptsize $1$};
    \node[draw=gray,circle,inner sep=1pt,fill=myorange!50] (d) at (-153:\r) {\scriptsize $1$};
    \node[draw=gray,circle,inner sep=1pt,fill=myorange!50] (e) at (-204:\r) {\scriptsize $1$};
    \node[draw=gray,circle,inner sep=1pt,fill=myorange!50] (f) at (-255:\r) {\scriptsize $1$};
    \node[draw=gray,circle,inner sep=1pt] (g) at (-306:\r) {\scriptsize $0$};
    \draw[edge] (a) to (b);
    \draw[edge] (b) to (c);
    \draw[edge] (c) to (d); 
    \draw[edge] (d) to (e);
    \draw[edge] (e) to (f);
    \draw[edge] (f) to (g);
    \draw[edge] (g) to (a);
    \end{scope}
    
    \end{tikzpicture}
    \caption{The persistent avalanche homology of three configurations on $C_7$. The left two initial configurations differ by a single grain, yet produce quite different persistent homology. The right two initial configurations have equal avalanche homology, but different persistent homology.}
    \label{fig:stable}
\end{figure}

\section{Discussion and open questions}\label{sec:discussion}
In this paper we have introduced the theory of avalanche homology and glimpsed the complex topology and combinatorics related to it. Yet many open questions and avenues of investigation remain.

We proved topological results for paths and cycles, for a selection of initial configurations. We have seen some interesting combinatorics in obtaining the homotopy types and avalanche homologies of these graphs, and we believe similar technologies remain valid in extending results to other initial configurations and to other classes of graphs, such as tournaments. In particular, as the avalanche complex arises from the sandpile dynamics on a digraph, one needs to keep explicit track of the firing sets generating simplices and the combinatorics this entails.  

We have seen that, even with simple digraphs, the avalanche homology can produce a wide range of Betti numbers. Moreover, all of our results and examples so far have been wedges of spheres, perhaps not surprising given the prevalence of wedges of spheres in combinatorial topology \cite[Page 5]{forman2002user}. We pose the following question:
\begin{question}
    Given any wedge of spheres $X$, does there exist a weakly connected digraph $G$ and an initial configuration $\bc_0$ such that $\av(G,\bc_0)\simeq X$. Thus, can every combination of Betti numbers be obtained as the avalanche homology of some digraph.    
\end{question}
We have computationally verified that different combinations of Betti numbers can be obtained as wedges of spheres, see for example Table \ref{tab:burning}(b). This prompts the natural question whether we can create avalanche complexes which are not wedges of spheres. Furthermore, it has been observed that torsion can occur in some graph homology theories \cite{adamaszek2014small,caputi2024homotopy,govc2020computing}, thus motivating the related question whether we can find torsion in avalanche complexes.

We have focused on directed graphs. The avalanche homology of undirected graphs requires a separate study. The undirected case seems more unwieldy to some degree, since the firing sets can ``oscillate" due to the lack of directionality, but we believe results can be obtained on the homotopy types of some simple classes of graphs, similar to those in Section~\ref{sec:results}. For example, on the complete graph with a sink where the initial configuration is $\bc_0 = [n]$, or any permutation of it, $\av(K_n,\bc_0)=\bigsqcup_{n}\ast$, which follows since every vertex will fire exactly once.

From the point of view of topological data analysis of real network data, further work on persistent avalanche homology of Section \ref{sec:per} is crucial. We have seen in Section~\ref{subsec:nerve_of_cover} that the required computational resources can be drastically reduced by using the nerve complex. Moreover, an advantage of avalanche homology is that we are not limited by the size of the graph, but by the size of the dynamics. By carefully selecting the initial configuration we can compute the avalanche homology on very large graphs, for which we are unable to compute other homology theories, such as that of the directed flag complex.

Finally, a topic of much interest in the study of sandpile dynamics is the distribution of avalanche sizes, which generally follows a power law, see \cite[Section 1.2.1]{Klivans} and \cite{bak1987self}. This distribution is linked to the distribution of simplices in avalanche complexes, thus this distribution may also follow a power law.

\section*{Code availability}\label{sec:codes}
Code to compute the avalanche homology is available at \url{https://github.com/JasonPSmith/AvalancheHomology}, including a tutorial notebook and a notebook containing all computations used within this article. The code utilises GUDHI \cite{gudhi} for homology computations.

\bibliographystyle{plain}
\bibliography{bibfile}
\end{document}